\title{On some products taken over the prime numbers}
\author{\textsc{Abdelmalek BEDHOUCHE} and \textsc{Bakir FARHI}\thanks{Correspending author.} \\
Laboratoire de Math\'ematiques appliqu\'ees \\
Facult\'e des Sciences Exactes \\
Universit\'e de Bejaia, 06000 Bejaia, Algeria \\[1mm]
\href{mailto:abdelmalek.bedhouche@univ-bejaia.dz}{abdelmalek.bedhouche@univ-bejaia.dz} (A. Bedhouche), \\
\href{bakir.farhi@univ-bejaia.dz}{bakir.farhi@univ-bejaia.dz} (B. Farhi)
}
\date{}
\let\up=\textsuperscript
\def\R{{\mathbb R}}
\def\N{{\mathbb N}}
\def\Z{{\mathbb Z}}
\def\lcm{\mathrm{lcm}}
\def\c{\mathrm{c}}
\newcommand{\card}[1]{\mathrm{Card}\,#1}
\def\EMdash{\leavevmode\hbox to 10.6mm{\vrule height .63ex depth -.59ex
    width 10mm\hfill}}
\theoremstyle{plain}
\numberwithin{equation}{section}
\newtheorem{thm}{Theorem}[section]
\newtheorem{lemma}[thm]{Lemma}
\newtheorem{prop}[thm]{Proposition}
\newtheorem{coll}[thm]{Corollary}
\newtheorem{conj}[thm]{Conjecture}
\theoremstyle{definition}
\theoremstyle{remark}
\newtheorem{rmks}[thm]{Remarks}
\begin{document}
\maketitle

\begin{abstract}
This paper is devoted to study some expressions of the type $\prod_{p} p^{\lfloor\frac{x}{f(p)}\rfloor}$, where $x$ is a nonnegative real number, $f$ is an arithmetic function satisfying some conditions, and the product is over the primes $p$. We begin by proving that such expressions can be expressed by using the $\mathrm{lcm}$ function, without any reference to prime numbers; we illustrate this result with several examples. The rest of the paper is devoted to study the two particular cases related to $f(m) = m$ and $f(m) = m - 1$. In both cases, we found arithmetic properties and analytic estimates for the underlying expressions. We also put forward an important conjecture for the case $f(m) = m - 1$, which depends on the counting of the prime numbers of a special form. 
\end{abstract}

\noindent\textbf{MSC 2010:} Primary 11A05, 11A41, 11N37. \\
\textbf{Keywords:} Prime factorisation, least common multiple, prime numbers of a special form, asymptotic estimates of arithmetic functions.

\section{Introduction and Notation}

Throughout this paper, we let $\N^*$ denote the set of positive integers and $\mathscr{P}$ the set of prime numbers. We let $\card \mathscr{A}$ denote the cardinal of a given finite set $\mathscr{A}$. For a given prime number $p$, we let $\vartheta_p$ denote the usual $p$-adic valuation. For $x \in \R$, we let $\lfloor x\rfloor$ denote the integer-part of $x$. For $N , b \in \N$, with $b \geq 2$, the expansion of $N$ in base $b$ is denoted by $N = \overline{a_k a_{k - 1} \dots a_1 a_0}_{(b)}$, meaning that $N = a_0 + b a_1 + b^2 a_2 + \dots + b^k a_k$ (with $k \in \N$, $a_0 , a_1 , \dots , a_k \in \{0 , 1 , \dots , b - 1\}$ and $a_k \neq 0$). In such a context, we let $S_b(N)$ denote the sum of base-$b$ digits of $N$, that is $S_b(N) := a_0 + a_1 + \dots + a_k$. Further, we let $\pi$ and $\theta$ respectively denote the prime-counting function and the Chebyshev theta function, defined by:
$$
\pi(x) := \sum_{\begin{subarray}{c}
p \text{ prime} \\
p \leq x
\end{subarray}} 1 ~~~~\text{and}~~~~
\theta(x) := \sum_{\begin{subarray}{c}
p \text{ prime} \\
p \leq x
\end{subarray}} \log{p} ~~~~~~~~~~ (\forall x \in \R^+) .
$$
The prime number theorem states that $\pi(x) \sim_{+ \infty} \frac{x}{\log{x}}$. Other equivalent statements are: $\theta(x) \sim_{+ \infty} x$ and $\log\lcm(1 , 2 , \dots , n) \sim_{+ \infty} n$ (see e.g., \cite[Chapter 4]{kon}). The weaker estimates $\pi(x) = O\left(\frac{x}{\log{x}}\right)$, $\theta(x) = O(x)$ and $\log\lcm(1 , 2 , \dots , n) = O(n)$ are called Chebyshev's estimates. In section \ref{sec4}, we use extensively Landau's big $O$ notation which we sometimes specify as follows: if $f$ and $g$ are two real functions, with $g > 0$, defined on some interval $I$ of $\R$, and depending on a parameter $t$, then we write $f = O_{\perp t}(g)$ if there exists a positive constant $M$, \underline{not depending on $t$}, such that $\vert f(x)\vert \leq M g(x)$ ($\forall x \in I$).

In number theory, it is common that a prime factorisation of some special numbers $N$ makes appear, as exponents of each prime $p$, expressions of the form $\lfloor\frac{u_N}{f(p)}\rfloor$ or a sum of such expressions. The most famous example is perhaps the Legendre formula stating that for any natural number $n$, we have
\begin{equation}\label{eq24}
n! = \prod_{p \text{ prime}} p^{\lfloor\frac{n}{p}\rfloor + \lfloor\frac{n}{p^2}\rfloor + \lfloor\frac{n}{p^3}\rfloor + \dots} ,
\end{equation}
which may be also reformulate in terms of base expansions as follows:
\begin{equation}\label{eq25}
n! = \prod_{p \text{ prime}} p^{\frac{n - S_p(n)}{p - 1}} .
\end{equation}
(See e.g., \cite[pages 76-77]{moll}). Another famous example is the formula of the least common multiple of the first consecutive positive integers:
\begin{equation}\label{eq26}
\lcm(1 , 2 , \dots , n) = \prod_{p \text{ prime}} p^{\lfloor\frac{\log{n}}{\log{p}}\rfloor} ~~~~~~~~~~ (\forall n \in \N) .
\end{equation}
Among other examples which are less known, we can cite the following
\begin{equation}\label{eq27}
\lcm\Big\{i_1 i_2 \cdots i_k ~;~ k \in \N , i_1 , i_2 , \dots , i_k \in \N^* , i_1 + i_2 + \dots + i_k \leq n\Big\} = \prod_{p \text{ prime}} p^{\lfloor\frac{n}{p}\rfloor} ,
\end{equation}
which is pointed out in the book of Cahen and Chabert \cite[page 246]{cah} and also by Farhi \cite{far} in the context of the integer-valued polynomials. Basing on the remark that in both Formulas \eqref{eq24}, \eqref{eq26} and \eqref{eq27}, the right-hand side (which is a product taken over the primes) is interpreted without any reference to prime numbers, we may naturally ask if an expression of a general type $\prod_{p \text{ prime}} p^{\lfloor\frac{x}{f_1(p)}\rfloor + \lfloor\frac{x}{f_2(p)}\rfloor + \dots}$ (where $x \in \R^+$ and ${(f_i)}_i$ is a sequence of positive functions, satisfying some regularity conditions) possess the same property; that is, it has an interpretation without reference to the primes. In this paper, we study only the case of the products 
$$
\pi_f(x) := \prod_{p \text{ prime}} p^{\left\lfloor\frac{x}{f(p)}\right\rfloor} ,
$$
for which we answer affirmatively to the previous question (under some hypothesis on $f$). After giving several applications of our result, we focus our study on the two particular cases $f(p) = p$ and $f(p) = p - 1$. Because in both cases, there is no loss of generality to take $x$ an integer, we are led to define for any $n \in \N$:
$$
\rho_n := \prod_{p \text{ prime}} p^{\left\lfloor\frac{n}{p}\right\rfloor} ~~~~\text{and}~~~~ \sigma_n := \prod_{p \text{ prime}} p^{\left\lfloor\frac{n}{p - 1}\right\rfloor} .
$$
We begin with the arithmetic study of $\rho_n$ and $\sigma_n$ by establishing several arithmetic properties concerning them; especially, we obtain for $\sigma_n$ a nontrivial divisor and a nontivial multiple. Moreover, we determine the $p$-adic valuations of the integers $\frac{\sigma_n}{n!}$ when the prime $p$ is large enough compared to $\sqrt{n}$; we discover that the prime numbers of the form $\lfloor\frac{n}{k} + 1\rfloor$ ($k \in \N^*$, $k < \sqrt{n + 1} + 1$) play a vital role in the arithmetic nature of the $\sigma_n$'s (this phenomenon will pop up again when studying analytically the $\sigma_n$'s). In another direction, we find asymptotic estimates for $\log{\rho_n}$ and $\log{\sigma_n}$. However, due to the difficulties encountered in counting the prime numbers of the form $\lfloor\frac{n}{k} + 1\rfloor$ ($1 \leq k \leq \sqrt{n}$), the optimal estimate of $\log{\sigma_n}$ is only given conjecturally by leaning on heuristic reasoning. We finally conclude the paper by pointing out the connection between our arithmetic and analytic studies concerning the numbers $\sigma_n$. 

\section{An expression of $\pi_f$ using the $\lcm$'s}
Our stronger result of expressing $\pi_f$ in terms of the $\lcm$'s (without any reference to prime numbers) is the following:
\begin{thm}\label{t1}
Let $f : \N^* \rightarrow \R_+$ be an arithmetic function such that $f(\N^* \setminus \{1\}) \subset \R^*_+$ (i.e., $f$ does not vanish except at $1$ eventually). Consider the set $\N^* \setminus \{1\}$ equipped with the partial order relation ``divide'' and the set $\R^*_+$ equipped with the usual total order relation ``$\leq$'', and suppose that the map:
$$
\begin{array}{rcl}
\widetilde{f} :~ \N^* \setminus \{1\} & \longrightarrow & \R^*_+ \\[2mm]
n & \longmapsto & \dfrac{f(n)}{\log{n}}
\end{array}
$$
is nondecreasing with respect to these two orders. Then, we have for any $x \in \R^+$:
\begin{multline*}
\prod_{p \text{ prime}} p^{\left\lfloor\frac{x}{f(p)}\right\rfloor} = \lcm\Big\{i_1 i_2 \cdots i_k ~;~ k \in \N , i_1 , i_2 , \dots , i_k \in \N^* , \\[-5mm]
f(i_1) + f(i_2) + \dots + f(i_k) \leq x\Big\} .
\end{multline*}
\end{thm}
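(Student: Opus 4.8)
The plan is to verify the identity prime by prime: I will show that for every prime $p$ the two sides have the same $p$-adic valuation, which is enough since a positive integer is determined by its family of $p$-adic valuations (the same computation incidentally shows that the right-hand side is a well-defined positive integer exactly when the left-hand side is). Fix a prime $p$ and write $L$ for the set appearing inside the $\lcm$ on the right. Since $p \in \N^* \setminus \{1\}$ we have $f(p) \in \R^*_+$, so $\vartheta_p\big(\pi_f(x)\big) = \lfloor x / f(p)\rfloor$ is meaningful, while on the right $\vartheta_p(\lcm L) = \sup_{m \in L} \vartheta_p(m)$. Thus it suffices to prove $\sup_{m \in L} \vartheta_p(m) = \lfloor x/f(p)\rfloor$.

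The inequality ``$\geq$'' is immediate: taking $k := \lfloor x/f(p)\rfloor$ and $i_1 = \dots = i_k = p$ satisfies $f(i_1) + \dots + f(i_k) = k\, f(p) \leq x$ by the choice of $k$, and the associated product $i_1 \cdots i_k = p^k$ has $p$-adic valuation $k$; hence $p^k \in L$ and $\vartheta_p(\lcm L) \geq k$ (for $k = 0$ this is just the empty product $1 \in L$).

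The core of the argument is the reverse inequality ``$\leq$''. Given any admissible tuple $(i_1, \dots, i_k)$, that is with $f(i_1) + \dots + f(i_k) \leq x$, I must bound $\vartheta_p(i_1 \cdots i_k) = \sum_{j=1}^k \vartheta_p(i_j)$; as this is an integer, it is enough to show $f(p) \sum_{j} \vartheta_p(i_j) \leq x$, and since $\sum_j f(i_j) \leq x$ this reduces to the termwise estimate
$$ f(p)\,\vartheta_p(m) \ \leq\ f(m) \qquad \text{for every prime } p \text{ and every } m \in \N^* . $$
This key lemma is where the hypothesis on $\widetilde f$ enters, and I expect it to be the main obstacle. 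To prove it, put $v := \vartheta_p(m)$; the case $v = 0$ is trivial since $f \geq 0$, so assume $v \geq 1$, whence $p^v \mid m$ and the three integers $p$, $p^v$, $m$ all lie in $\N^* \setminus \{1\}$. The point is to run the monotonicity of $\widetilde f$ along the divisibility chain $p \mid p^v \mid m$: from $p \mid p^v$ we get $\widetilde f(p) \leq \widetilde f(p^v)$, i.e. $\frac{f(p)}{\log p} \leq \frac{f(p^v)}{v \log p}$, hence $v\, f(p) \leq f(p^v)$; from $p^v \mid m$ we get $\widetilde f(p^v) \leq \widetilde f(m)$, i.e. $f(p^v) \leq \frac{v \log p}{\log m}\, f(m)$, and since $p^v \leq m$ forces $v \log p = \log(p^v) \leq \log m$, the coefficient $\frac{v \log p}{\log m}$ is at most $1$, giving $f(p^v) \leq f(m)$. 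Chaining these, $v\, f(p) \leq f(p^v) \leq f(m)$, which is the claim.

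Putting the two inequalities together yields $\vartheta_p(\lcm L) = \lfloor x/f(p)\rfloor = \vartheta_p(\pi_f(x))$ for every prime $p$, and hence the asserted identity. The only routine points left to check are that $L \neq \emptyset$ (it contains $1$) and the bookkeeping for the empty tuple, neither of which presents any difficulty.
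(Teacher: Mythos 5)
Your proposal is correct and follows essentially the same route as the paper: both sides are compared via $p$-adic valuations, the lower bound comes from the tuple $i_1=\dots=i_k=p$ with $k=\lfloor x/f(p)\rfloor$, and the upper bound reduces to the key inequality $f(p)\,\vartheta_p(m)\le f(m)$, which is exactly the paper's Lemma \ref{l1}. The only (immaterial) difference is in the proof of that lemma: you apply the monotonicity of $\widetilde f$ twice along the chain $p\mid p^v\mid m$, whereas the paper applies it once to $p\mid m$ and combines it with the elementary bound $\vartheta_p(m)\le \log m/\log p$.
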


In order to present a clean proof of Theorem \ref{t1}, we go through the following lemma:

\begin{lemma}\label{l1}
Let $f : \N^* \rightarrow \R_+$ as in Theorem \ref{t1}. Then, for any prime number $p$ and any positive integer $a$, we have
$$
\vartheta_p(a) \leq \frac{f(a)}{f(p)} .
$$
\end{lemma}

\begin{proof}
Let $p$ be a prime number and $a$ be a positive integer. Since the inequality of the lemma is trivial when $\vartheta_p(a) = 0$, we may suppose that $\vartheta_p(a) \geq 1$; that is $p \mid a$. Setting $\alpha = \vartheta_p(a)$, we can write $a = b p^{\alpha}$ for some $b \in \N^*$ with $p \nmid b$. Thus
\begin{equation}\label{eq1}
\alpha \leq \alpha + \frac{\log{b}}{\log{p}} = \frac{\log(b p^{\alpha})}{\log{p}} = \frac{\log{a}}{\log{p}} .
\end{equation}
Next, the fact that $p \mid a$ implies (according to our assumptions on $f$) that:
$$
\frac{f(p)}{\log{p}} \leq \frac{f(a)}{\log{a}} ;
$$
that is
\begin{equation}\label{eq2}
\frac{\log{a}}{\log{p}} \leq \frac{f(a)}{f(p)} . 
\end{equation}
Combining \eqref{eq1} and \eqref{eq2}, we get
$$
\alpha \leq \frac{f(a)}{f(p)} ,
$$
as required. This completes the proof of the lemma.
\end{proof}

\begin{proof}[Proof of Theorem \ref{t1}]
Let $x \in \R_+$ be fixed. For a given prime number $p$, the $p$-adic valuation of the left-hand side of the identity of the theorem is equal to $\lfloor\frac{x}{f(p)}\rfloor$, while the $p$-adic valuation of the right-hand side of the same identity is equal to $\ell_p := \max\{\vartheta_p(i_1 i_2 \dots i_k) ; k \in \N , i_1 , \dots , i_k \in \N^* , f(i_1) + \dots + f(i_k) \leq x\}$. So, we have to show that $\ell_p = \lfloor\frac{x}{f(p)}\rfloor$ (for any prime number $p$). To do so, we are going to prove the two inequalities $\ell_p \geq \lfloor\frac{x}{f(p)}\rfloor$ and $\ell_p \leq \lfloor\frac{x}{f(p)}\rfloor$ (where $p$ is a given prime number).

First, for a given prime number $p$, let us show that $\ell_p \geq \lfloor\frac{x}{f(p)}\rfloor$. By considering the particular natural number:
$$
k = \left\lfloor\frac{x}{f(p)}\right\rfloor
$$
and the particular positive integers:
$$
i_1 = i_2 = \dots = i_k = p ,
$$
we get
$$
f(i_1) + f(i_2) + \dots + f(i_k) = k f(p) = \left\lfloor\frac{x}{f(p)}\right\rfloor f(p) \leq x .
$$
Thus (according to the definition of $\ell_p$):
$$
\ell_p \geq \vartheta_p\left(i_1 i_2 \cdots i_k\right) = \vartheta_p\left(p^k\right) = k = \left\lfloor\frac{x}{f(p)}\right\rfloor ,
$$
as required.

Now, for a given prime number $p$, let us show that $\ell_p \leq \lfloor\frac{x}{f(p)}\rfloor$. For any $k \in \N$ and any $i_1 , i_2 , \dots , i_k \in \N^*$, with $f(i_1) + f(i_2) + \dots + f(i_k) \leq x$, we have
\begin{align*}
\vartheta_p\left(i_1 i_2 \cdots i_k\right) & = \vartheta_p(i_1) + \vartheta_p(i_2) + \dots + \vartheta_p(i_k) \\
& \leq \frac{f(i_1)}{f(p)} + \frac{f(i_2)}{f(p)} + \dots + \frac{f(i_k)}{f(p)} ~~~~~~~~~~ (\text{according to Lemma \ref{l1}}) \\
& = \frac{f(i_1) + f(i_2) + \dots + f(i_k)}{f(p)} \\
& \leq \frac{x}{f(p)} ;
\end{align*}
but since $\vartheta_p(i_1 i_2 \cdots i_k) \in \N$, it follows that:
$$
\vartheta_p\left(i_1 i_2 \cdots i_k\right) \leq \left\lfloor\frac{x}{f(p)}\right\rfloor .
$$
The definition of $\ell_p$ concludes that
$$
\ell_p \leq \left\lfloor\frac{x}{f(p)}\right\rfloor ,
$$
as required. This completes the proof.
\end{proof}

\begin{rmks}\label{rmks1}
Let us put ourselves in the situation of Theorem \ref{t1}.
\begin{enumerate}
\item If the map $\widetilde{f}$ is nondecreasing in the usual sense (i.e., with respect to the usual orders of the two sets $\N^* \setminus \{1\}$ and $\R_+^*$) then it remains nondecresing in the sense imposed by Theorem \ref{t1} (this immediately follows from the implication: $a \mid b \Rightarrow a \leq b$, $\forall a , b \in \N^*$).
\item More generally than the previous item, if the restriction of the map $\widetilde{f}$ on $\N^* \setminus \{1 , 2\}$ is nondecreasing in the usual sense and $\widetilde{f}(2) \leq \widetilde{f}(4)$ then $\widetilde{f}$ is nondecreasing in the sense imposed by Theorem \ref{t1}.
\end{enumerate}
\end{rmks}

Now, from Theorem \ref{t1}, we derive the following corollary in which the condition imposed on $f$ is made simpler.

\begin{coll}\label{coll1}
Let $f : \N^* \rightarrow \R_+$ be an arithmetic function satisfying $f(\N^* \setminus \{1\}) \subset \R_+^*$. Suppose that the map
$$
\begin{array}{rcl}
\N^* \setminus \{1\} & \longrightarrow & \R_+^* \\[2mm]
n & \longmapsto & \dfrac{f(n)}{n}
\end{array}
$$
is nondecreasing in the usual sense (i.e., with respect to the usual order of $\R$). Then we have for any $x \in \R_+$: 
\begin{multline*}
\prod_{p \text{ prime}} p^{\left\lfloor\frac{x}{f(p)}\right\rfloor} = \lcm\Big\{i_1 i_2 \cdots i_k ~;~ k \in \N , i_1 , i_2 , \dots , i_k \in \N^* , \\[-5mm]
f(i_1) + f(i_2) + \dots + f(i_k) \leq x\Big\} .
\end{multline*}
\end{coll}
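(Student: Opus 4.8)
The plan is to deduce Corollary \ref{coll1} from Theorem \ref{t1} by verifying that the hypothesis of the corollary implies the hypothesis of the theorem; once that implication is established, the conclusion of the corollary is literally the conclusion of Theorem \ref{t1}, so nothing further needs to be done. Concretely, I would write $\widetilde{f}(n) = \frac{f(n)}{\log n} = \frac{f(n)}{n}\cdot\frac{n}{\log n}$ for $n \in \N^*\setminus\{1\}$, and try to show that $\widetilde{f}$ is nondecreasing with respect to the divisibility order on $\N^*\setminus\{1\}$ and the usual order on $\R_+^*$, which is exactly what Theorem \ref{t1} requires.

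First I would fix $a, b \in \N^*\setminus\{1\}$ with $a \mid b$; the goal is $\widetilde{f}(a) \leq \widetilde{f}(b)$. If $a = b$ this is trivial, so assume $a < b$ (which holds whenever $a \mid b$ and $a \neq b$). Then I would argue in two factors: on one hand, $\frac{f(a)}{a} \leq \frac{f(b)}{b}$ because $a \leq b$ and the map $n \mapsto \frac{f(n)}{n}$ is nondecreasing in the usual sense (hypothesis of the corollary); on the other hand, $\frac{a}{\log a} \leq \frac{b}{\log b}$ because the function $t \mapsto \frac{t}{\log t}$ is nondecreasing on $[e, +\infty)$ and, for integers $\geq 2$, one checks directly that $\frac{2}{\log 2} \leq \frac{3}{\log 3} \leq \frac{4}{\log 4} \leq \dots$ — the only pair not covered by monotonicity on $[e,+\infty)$ is $(2,3)$ versus what comes after, and the numerical comparison $\frac{2}{\log 2} < \frac{3}{\log 3} < \frac{4}{\log 4}$ handles the small cases. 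Multiplying the two inequalities between positive quantities gives $\widetilde{f}(a) = \frac{f(a)}{a}\cdot\frac{a}{\log a} \leq \frac{f(b)}{b}\cdot\frac{b}{\log b} = \widetilde{f}(b)$, as wanted.

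With the hypothesis of Theorem \ref{t1} now verified, I would simply invoke that theorem to conclude the displayed identity of the corollary for every $x \in \R_+$. Alternatively, and perhaps more cleanly for the write-up, I could appeal to the second item of Remarks \ref{rmks1}: it suffices to check that the restriction of $\widetilde{f}$ to $\N^*\setminus\{1,2\}$ is nondecreasing in the usual sense and that $\widetilde{f}(2) \leq \widetilde{f}(4)$; both follow from the factorisation above together with the usual monotonicity of $t\mapsto t/\log t$ on $[3,+\infty)$ and the single numerical check $\widetilde{f}(2) = \frac{f(2)}{2}\cdot\frac{2}{\log 2} \leq \frac{f(4)}{4}\cdot\frac{4}{\log 4} = \widetilde{f}(4)$, which uses $\frac{f(2)}{2}\leq\frac{f(4)}{4}$ and $\frac{2}{\log 2} = \frac{4}{\log 4}$ (indeed $\frac{4}{\log 4} = \frac{4}{2\log 2} = \frac{2}{\log 2}$, so this factor is even an equality).

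The only real obstacle is the elementary monotonicity statement for $t\mapsto t/\log t$ on integers $\geq 2$: its derivative is $\frac{\log t - 1}{(\log t)^2}$, which is negative on $(1,e)$, so the function is \emph{not} globally nondecreasing on $(1,+\infty)$ and one genuinely must treat $n = 2$ separately. Fortunately the coincidence $\frac{2}{\log 2} = \frac{4}{\log 4}$ makes even this degenerate case painless, and everything else is a routine composition of monotone maps; there is no deeper difficulty, since the substantive content already resides in Theorem \ref{t1}.
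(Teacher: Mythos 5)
Your second route --- verifying the hypothesis of Item 2 of Remarks \ref{rmks1} by factoring $\widetilde{f}(n) = \frac{f(n)}{n}\cdot\frac{n}{\log n}$, using monotonicity of $t \mapsto t/\log t$ on $\{3,4,5,\dots\}$, and checking $\widetilde{f}(2) \leq \widetilde{f}(4)$ via the identity $\frac{2}{\log 2} = \frac{4}{\log 4}$ --- is exactly the paper's proof, and it is correct.

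However, your first (``direct'') route contains a false claim: the chain $\frac{2}{\log 2} \leq \frac{3}{\log 3} \leq \frac{4}{\log 4}$ does not hold. Numerically $\frac{2}{\log 2} \approx 2.885$ while $\frac{3}{\log 3} \approx 2.731$, so in fact $\frac{2}{\log 2} > \frac{3}{\log 3}$; the map $n \mapsto n/\log n$ is \emph{not} nondecreasing on the integers $\geq 2$ for the usual order, and your own final paragraph acknowledges this, so the write-up is internally inconsistent. The direct route is nevertheless repairable, precisely because Theorem \ref{t1} only requires monotonicity along the divisibility order: if $a \mid b$ with $a = 2$ and $b \neq 2$, then $b$ is an even integer $\geq 4$, and $\frac{2}{\log 2} = \frac{4}{\log 4} \leq \frac{b}{\log b}$ since $t \mapsto t/\log t$ is increasing on $[e,+\infty)$. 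The problematic comparison of $2$ with $3$ simply never arises because $2 \nmid 3$. If you keep the direct argument, replace the false chain of inequalities by this observation; otherwise present only the second route, which needs no repair and coincides with the paper's.
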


\begin{proof}
We use Theorem \ref{t1} together with Item 2 of Remarks \ref{rmks1}. We remark that $\widetilde{f}$ (defined as in Theorem \ref{t1}) is the product of the two functions: $n \mapsto \frac{f(n)}{n}$ (supposed nondecreasing in the usual sense on $\N^* \setminus \{1\}$) and $n \mapsto \frac{n}{\log{n}}$ (which is nondecreasing on $\N^* \setminus \{1 , 2\} = \{3 , 4 , 5 , \dots\}$, as a simple study of function shows). So, $\widetilde{f}$ is nondecreasing on $\N^* \setminus \{1 , 2\}$ in the usual sense. In addition, we have
$$
\widetilde{f}(2) = \frac{f(2)}{\log{2}} = \frac{f(2)}{2} \cdot \frac{2}{\log{2}} = \frac{f(2)}{2} \cdot \frac{4}{\log{4}} \leq \frac{f(4)}{4} \cdot \frac{4}{\log{4}}
$$
(since $n \mapsto \frac{f(n)}{n}$ is supposed nondecreasing in the usual sense on $\N^* \setminus \{1\}$). That is
$$
\widetilde{f}(2) \leq \frac{f(4)}{\log{4}} = \widetilde{f}(4) .
$$
The conclusion follows from Item 2 of Remarks \ref{rmks1} and Theorem \ref{t1}.
\end{proof}

\subsection*{Some applications:}
{\large\bf 1.} By applying Theorem \ref{t1} for $f(m) = \log{m}$ (which cleary satisfies the required conditions), we obtain that for any $x \in \R_+$, we have
\begin{align*}
\prod_{p \text{ prime}} p^{\left\lfloor\frac{x}{\log{p}}\right\rfloor} & = \lcm\Big\{i_1 i_2 \cdots i_k ~;~ k \in \N , i_1 , i_2 , \dots , i_k \in \N^* , \\[-5mm]
& \hspace*{6cm} \log{i_1} + \log{i_2} + \dots + \log{i_k} \leq x\Big\} \\
& = \lcm\Big\{i_1 i_2 \cdots i_k ~;~ k \in \N , i_1 , i_2 , \dots , i_k \in \N^* , i_1 i_2 \cdots i_k \leq e^x\Big\} \\
& = \lcm\Big\{1 , 2 , \dots , \left\lfloor e^x\right\rfloor\Big\} .
\end{align*}
By taking in particular $x = \log{n}$ ($n \in \N^*$), we obtain the well-known formula:
$$
\prod_{p \text{ prime}} p^{\left\lfloor\frac{\log{n}}{\log{p}}\right\rfloor} = \lcm\left\{1 , 2 , \dots , n\right\} ~~~~~~~~~~ (\forall n \in \N^*) .
$$

\noindent{\large\bf 2.} By applying Corollary \ref{coll1} for the function $f(m) = m$ (which clearly satisfies the imposed conditions), we obtain in particular that for all $n \in \N$, we have
\begin{multline}\label{eq3}
\prod_{p \text{ prime}} p^{\left\lfloor\frac{n}{p}\right\rfloor} = \lcm\Big\{i_1 i_2 \cdots i_k ~;~ k \in \N , i_1 , i_2 , \dots , i_k \in \N^* , \\[-5mm]
i_1 + i_2 + \dots + i_k \leq n\Big\} ,
\end{multline}
which is already pointed out by Cahen and Chabert \cite{cah} and by Farhi \cite{far}.

\medskip

\noindent{\large\bf 3.} (Generalization of \eqref{eq3}). Let $\alpha \geq 1$. By applying Corollary \ref{coll1} for the function $f(m) = m^{\alpha}$ (which clearly satisfies the imposed conditions), we obtain in particular that for all $n \in \N$, we have
\begin{multline*}\label{eq4}
\prod_{p \text{ prime}} p^{\left\lfloor\frac{n}{p^{\alpha}}\right\rfloor} = \lcm\Big\{i_1 i_2 \cdots i_k ~;~ k \in \N , i_1 , i_2 , \dots , i_k \in \N^* , \\[-5mm]
i_1^{\alpha} + i_2^{\alpha} + \dots + i_k^{\alpha} \leq n\Big\} .
\end{multline*}

\noindent{\large\bf 4.} For all $n , k \in \N$, with $n \geq k$, let us define (as in \cite{far}):
$$
q_{n , k} := \lcm\Big\{i_1 i_2 \cdots i_k ~;~ i_1 , i_2 , \dots , i_k \in \N^* , i_1 + i_2 + \dots + i_k \leq n\Big\} .
$$
Note that these numbers have been already encountered and studied by Farhi \cite{far} in a context relating to the integer-valued polynomials. By applying Corollary \ref{coll1} for the function $f(m) = m - 1$ (which clearly satisfies the imposed conditions), we obtain that for all $n \in \N$, we have 
\begin{eqnarray}
\prod_{p \text{ prime}} p^{\left\lfloor\frac{n}{p - 1}\right\rfloor} & = & \lcm\Big\{i_1 i_2 \cdots i_k ~;~ k \in \N , i_1 , i_2 , \dots , i_k \in \N^* , \notag \\[-5mm]
& ~ & \hspace*{3.5cm} (i_1 - 1) + (i_2 - 1) + \dots + (i_k - 1) \leq n\Big\}  \notag \\
& \hspace*{-2.4cm} = & \hspace*{-1.2cm} \lcm\Big\{i_1 i_2 \cdots i_k ~;~ k \in \N , i_1 , i_2 , \dots , i_k \in \N^* , i_1 + i_2 + \dots + i_k \leq n + k\Big\} \notag \\
& \hspace*{-2.4cm} = & \hspace*{-1.2cm} \lcm\Big\{q_{n + k , k} ~;~ k \in \N \Big\} , \label{eq5} 
\end{eqnarray}
which remarkably represents the least common multiple of the $n$\up{th} diagonal of the arithmetic triangle of the $q_{i , j}$'s, beginning as follows (see \cite{far}):

\begin{table}[!h]
$$
\begin{array}{llllllll}
1 & ~ & ~ & ~ & ~ & ~ & ~ & \\
1 & 1 & ~ & ~ & ~ & ~ & ~ & \\
1 & 2 & 1 & ~ & ~ & ~ & ~ & \\
1 & 6 & 2 & 1 & ~ & ~ & ~ & \\
1 & 12 & 12 & 2 & 1 & ~ & ~ & \\
1 & 60 & 12 & 12 & 2 & 1 & ~ & \\
1 & 60 & 360 & 24 & 12 & 2 & 1 & \\
1 & 420 & 360 & 360 & 24 & 12 & 2 & 1
\end{array}
$$
\caption{The triangle of the $q_{n , k}$'s for $0 \leq k \leq n \leq 7$}
\end{table}

For a given $n \in \N$, let $D_n = {(d_{n , k})}_{k \in \N}$ denote the sequence of the $n$\up{th} diagonal of the above triangle, that is
\begin{eqnarray}
d_{n , k} & := & q_{n + k , k} \notag \\
& = & \lcm\Big\{i_1 i_2 \cdots i_k ~;~ i_1 , i_2 , \dots , i_k \in \N^* , i_1 + i_2 + \dots + i_k \leq n + k\Big\}  \label{eq6}
\end{eqnarray}
($\forall k \in \N$). In order to simplify Formula \eqref{eq5}, we are going to show that the sequences $D_n$ ($n \in \N$) are all nondecreasing in the divisibility sense and eventually constant. Precisely, we have the following proposition:
\begin{prop}\label{p1}
For all $n , k \in \N$, we have
$$
d_{n , k} ~~\text{divides}~~ d_{n , k + 1} .
$$
If in addition $k \geq n$, then we have
$$
d_{n , k} = d_{n , n} .
$$
\end{prop}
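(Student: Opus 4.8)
The plan is to work directly from the definition $d_{n,k} = \lcm G_{n,k}$, where I write $G_{n,k}$ for the generating set $\{i_1 i_2 \cdots i_k \;:\; k\text{-tuple of positive integers with } i_1 + \cdots + i_k \leq n + k\}$, and to use repeatedly the elementary fact that $\lcm \A$ divides $\lcm \B$ whenever every element of $\A$ divides some element of $\B$ (in particular whenever $\A \subseteq \B$). Thus everything reduces to comparing generating sets. Note also that each $G_{n,k}$ is finite, so all the $\lcm$'s are well defined, and the degenerate case $k = 0$ (empty tuple, empty product $1$, sum $0 \leq n$) makes both assertions trivial.

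For the first assertion, I would observe that appending a $1$ to a tuple is harmless: if $i_1 + \cdots + i_k \leq n + k$, then the $(k+1)$-tuple $(i_1, \ldots, i_k, 1)$ has sum $\leq n + k + 1 = n + (k+1)$ and the same product $i_1 \cdots i_k$. Hence $G_{n,k} \subseteq G_{n,k+1}$, so $d_{n,k}$ divides $d_{n,k+1}$; iterating this from $k = n$ upward gives that $d_{n,n}$ divides $d_{n,k}$ for every $k \geq n$.

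For the second assertion it then remains to prove the reverse divisibility $d_{n,k} \mid d_{n,n}$ when $k \geq n$, and for this I would take an arbitrary generator $g = i_1 \cdots i_k$ of $d_{n,k}$ and show that $g \in G_{n,n}$. Discard every factor equal to $1$; let $S = \{j : i_j \geq 2\}$ and $m = |S|$. The crucial point is the counting bound
$$
m \;\leq\; \sum_{j \in S}(i_j - 1) \;\leq\; \sum_{j=1}^{k}(i_j - 1) \;=\; (i_1 + \cdots + i_k) - k \;\leq\; n ,
$$
so at most $n$ of the factors exceed $1$. Re-pad the tuple $(i_j)_{j \in S}$ with $n - m \geq 0$ ones to obtain an $n$-tuple having the same product $g$ and with entry-sum
$$
\sum_{j \in S} i_j + (n - m) \;=\; \Bigl(\sum_{j \in S}(i_j - 1) + m\Bigr) + (n - m) \;\leq\; n + n ,
$$
so this $n$-tuple belongs to $G_{n,n}$ and therefore $g \mid d_{n,n}$. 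As $g$ was an arbitrary generator of $d_{n,k}$, this yields $d_{n,k} \mid d_{n,n}$, and combined with the divisibility $d_{n,n} \mid d_{n,k}$ from the first part we conclude $d_{n,k} = d_{n,n}$.

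I do not expect a genuine obstacle here: the whole argument hinges on the single inequality $m \leq n$, which is precisely what guarantees that the reindexed tuple fits into length $n$ while its sum stays within the budget $n + n$ permitted by $G_{n,n}$; everything else is bookkeeping with the divisibility property of the $\lcm$.
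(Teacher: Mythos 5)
Your proof is correct and follows essentially the same route as the paper's: the first part by appending a factor $1$, and the second by bounding the number of factors exceeding $1$ (your $m \leq n$ is exactly the paper's $\ell \geq k-n$ for the count $\ell = k-m$ of factors equal to $1$) and then repacking into an $n$-tuple with sum at most $2n$. No gaps; the bookkeeping is sound.
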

\begin{proof}
Let $n , k \in \N$ be fixed and let $i_1 , i_2 , \dots , i_k \in \N^*$ such that $i_1 + i_2 + \dots + i_k \leq n + k$. By setting $i_{k + 1} = 1$, we have $i_1 + i_2 + \dots + i_k + i_{k + 1} \leq n + k + 1$; thus (by \eqref{eq6}) $d_{n , k + 1}$ is a multiple of $i_1 i_2 \cdots i_k i_{k + 1} = i_1 i_2 \cdots i_k$. Since this holds for any $i_1 , i_2 , \dots , i_k \in \N^*$ such that $i_1 + i_2 + \dots + i_k \leq n + k$, we derive that $d_{n , k + 1}$ is a multiple of $d_{n , k}$, as required.

Now, let us prove the second part of the proposition. So, suppose that $k \geq n$ and let us prove that $d_{n , k} = d_{n , n}$. It follows from an immediate induction leaning on the result of the first part of the proposition (proved above) that $d_{n , n} \mid d_{n , k}$. So, it remains to prove that $d_{n , k} \mid d_{n , n}$. Let $i_1 , i_2 , \dots , i_k \in \N^*$ such that $i_1 + i_2 + \dots + i_k \leq n + k$. Let $\ell \in \N$ denote the number of indices $i_r$ ($1 \leq r \leq k$) which are equal to $1$; so we have exactly $(k - \ell)$ indices $i_r$ which are $\geq 2$. Thus we have
$$
i_1 + i_2 + \dots + i_k \geq \ell + 2 (k - \ell) = 2 k - \ell .
$$
But since $i_1 + i_2 + \dots + i_k \leq n + k$, we derive that $2 k - \ell \leq n + k$, which gives $\ell \geq k - n$. This proves that we have at least $(k - n)$ indices $i_r$ which are equal to $1$. By assuming (without loss of generality) that those indices are $i_{n + 1} , i_{n + 2} , \dots , i_k$ (i.e., $i_{n + 1} = i_{n + 2} = \dots = i_k = 1$), we get
\begin{align*}
i_1 i_2 \cdots i_n & = i_1 i_2 \cdots i_k \\[-5mm]
\intertext{and} \\[-1cm]
i_1 + i_2 + \dots + i_n & = \left(i_1 + i_2 + \dots + i_k\right) - (k - n) \\
& \leq (n + k) - (k - n) \\
& = 2 n .
\end{align*}
This shows that each product $i_1 i_2 \cdots i_k$ occurring in the definition of $d_{n , k}$ reduces (by permuting the $i_r$'s and eliminate those of them which are equal to $1$) to a product $j_1 j_2 \cdots j_n$ which occurs in the definition of $d_{n , n}$. Consequently $d_{n , k} \mid d_{n , n}$, as required. This completes the proof of the proposition.
\end{proof}

Using Proposition \ref{p1}, we have for any $n \in \N$:
\begin{align*}
\lcm\Big\{q_{n + k , k} ~;~ k \in \N\Big\} & = \lcm\Big\{d_{n , k} ~;~ k \in \N\Big\} \\
& = d_{n , n} \\
& \hspace*{-5mm} = \lcm\Big\{i_1 i_2 \cdots i_n ~;~ i_1 , i_2 , \dots , i_n \in \N^* , i_1 + i_2 + \dots + i_n \leq 2 n\Big\} .
\end{align*}

\noindent This concludes to the following interesting corollary, simplifying Formula \eqref{eq5}:

\begin{coll}\label{coll2}
For any $n \in \N$, we have
\begin{equation}
\prod_{p \text{ prime}} p^{\left\lfloor\frac{n}{p - 1}\right\rfloor} = \lcm\Big\{i_1 i_2 \cdots i_n ~;~ i_1 , i_2 , \dots , i_n \in \N^* , i_1 + i_2 + \dots + i_n \leq 2 n\Big\} . \tag*{$\square$}
\end{equation}
\end{coll}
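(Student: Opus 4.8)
The plan is to deduce Corollary~\ref{coll2} by feeding Proposition~\ref{p1} into Formula~\eqref{eq5}, the reduction having essentially been set up already. Recall that by definition $d_{n,k} = q_{n+k,k}$, so Formula~\eqref{eq5} reads
$$
\prod_{p \text{ prime}} p^{\left\lfloor\frac{n}{p-1}\right\rfloor} = \lcm\Big\{d_{n,k} ~;~ k \in \N\Big\} .
$$
First I would record the two structural facts Proposition~\ref{p1} gives about the sequence ${(d_{n,k})}_{k \in \N}$: it is an ascending chain for the divisibility order, i.e.\ $d_{n,k} \mid d_{n,k+1}$ for every $k$, and it is eventually constant, equal to $d_{n,n}$ from the index $k = n$ on. Combining these yields $\lcm\{d_{n,k} ~;~ k \in \N\} = d_{n,n}$: every term with $k \leq n$ divides $d_{n,n}$ (walk up the chain), every term with $k \geq n$ equals $d_{n,n}$, so $d_{n,n}$ is a common multiple of the whole family, and since it belongs to the family it is the least one.

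Next I would simply unfold the definition~\eqref{eq6} of $d_{n,n}$, which for $k = n$ reads
$$
d_{n,n} = \lcm\Big\{i_1 i_2 \cdots i_n ~;~ i_1 , i_2 , \dots , i_n \in \N^* , i_1 + i_2 + \dots + i_n \leq 2n\Big\} ,
$$
precisely the right-hand side claimed in the corollary. Chaining the three equalities gives the result.

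I do not anticipate any genuine obstacle: all the substance lives in Proposition~\ref{p1} and Formula~\eqref{eq5}, and what is left is the elementary observation that the $\lcm$ of an ascending, eventually-constant divisibility chain is its terminal value. The only points deserving a moment's care are bookkeeping ones — namely that the degenerate index $k = 0$ contributes $d_{n,0} = q_{n,0} = 1$ (the $\lcm$ of the single empty product), which divides everything and is therefore harmless, and that $n + k \geq k$ always holds so each $q_{n+k,k}$ appearing in \eqref{eq5} is well defined.
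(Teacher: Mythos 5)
Your argument is exactly the paper's: it combines Formula \eqref{eq5} with Proposition \ref{p1} to collapse $\lcm\{d_{n,k} ; k \in \N\}$ to the terminal value $d_{n,n}$ and then unfolds the definition \eqref{eq6}. The proposal is correct and matches the paper's derivation, with the added (harmless and accurate) bookkeeping remark about the $k = 0$ term.
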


\section[Arithmetic results]{Arithmetic results on the numbers $\rho_n$ and $\sigma_n$}\label{sec3}
A certain number of arithmetic properties concerning the numbers $\rho_n$ and $\sigma_n$ are either immediate or quite easy to prove. We have gathered them in the following proposition:

\begin{prop}\label{p2}
For any natural number $n$, we have
\begin{enumerate}
\item[{\rm (i)}] $\rho_n \mid \rho_{n + 1}$, $\sigma_n \mid \sigma_{n + 1}$, and $\rho_n \mid \sigma_n$;
\item[{\rm (ii)}] $\rho_n \mid n!$;
\item[{\rm (iii)}] $n! \mid \sigma_n$ and $\sigma_n \mid (2 n)!$;
\item[{\rm (iv)}] $\sigma_{2 n + 1} = 2 \sigma_{2 n}$.
\end{enumerate}
\end{prop}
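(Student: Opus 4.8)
The plan is to work one prime at a time: by unique factorisation, each divisibility is equivalent to an inequality between $p$-adic valuations, and here all the relevant valuations are explicit. Recall $\vartheta_p(\rho_n) = \lfloor n/p\rfloor$, $\vartheta_p(\sigma_n) = \lfloor n/(p-1)\rfloor$, and (Legendre) $\vartheta_p(m!) = \sum_{k\geq 1}\lfloor m/p^k\rfloor$. With these three formulas in hand, the whole proposition becomes a short list of elementary inequalities and one parity observation.

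For (i), the relations $\rho_n \mid \rho_{n+1}$ and $\sigma_n\mid\sigma_{n+1}$ follow from the monotonicity of $x\mapsto\lfloor x\rfloor$ applied to $\frac{n}{p}\leq\frac{n+1}{p}$ and $\frac{n}{p-1}\leq\frac{n+1}{p-1}$; and $\rho_n\mid\sigma_n$ follows from $\frac{n}{p}\leq\frac{n}{p-1}$, hence $\lfloor n/p\rfloor\leq\lfloor n/(p-1)\rfloor$, for every prime $p$. For (ii), Legendre's formula gives $\vartheta_p(n!) = \lfloor n/p\rfloor + \lfloor n/p^2\rfloor + \cdots \geq \lfloor n/p\rfloor = \vartheta_p(\rho_n)$, so $\rho_n\mid n!$.

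For (iii), the divisibility $n!\mid\sigma_n$ uses the geometric-series identity: $\vartheta_p(n!) = \sum_{k\geq 1}\lfloor n/p^k\rfloor \leq \sum_{k\geq 1} n/p^k = \frac{n}{p-1}$, and since the left-hand side is an integer it is in fact $\leq\lfloor n/(p-1)\rfloor = \vartheta_p(\sigma_n)$. For $\sigma_n\mid(2n)!$, I would compare $\vartheta_p(\sigma_n) = \lfloor n/(p-1)\rfloor$ with the single Legendre term $\lfloor 2n/p\rfloor\leq\vartheta_p((2n)!)$; the inequality $\lfloor n/(p-1)\rfloor\leq\lfloor 2n/p\rfloor$ holds because $\frac{2n}{p} - \frac{n}{p-1} = \frac{n(p-2)}{p(p-1)}\geq 0$ for every prime $p\geq 2$. (One could instead invoke Corollary \ref{coll2}, but the valuation computation is quicker and self-contained.)

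For (iv), I would compute $\vartheta_p(\sigma_{2n+1}) - \vartheta_p(\sigma_{2n}) = \lfloor\frac{2n+1}{p-1}\rfloor - \lfloor\frac{2n}{p-1}\rfloor$ for each prime $p$. When $p=2$ this equals $(2n+1)-2n = 1$. When $p$ is odd, $p-1$ is even, hence cannot divide the odd integer $2n+1$; so no multiple of $p-1$ lies in the interval $(2n,\,2n+1]$ and the difference is $0$. Multiplying over all primes gives $\sigma_{2n+1} = 2^1\sigma_{2n} = 2\sigma_{2n}$. I do not anticipate a genuine obstacle here: every step is a one-line estimate. The only places calling for a moment's thought are the choice of the auxiliary quantity $\lfloor 2n/p\rfloor$ in the proof of $\sigma_n\mid(2n)!$ and the recognition of the exact geometric sum $\sum_{k\geq 1} n/p^k = n/(p-1)$ in the proof of $n!\mid\sigma_n$.
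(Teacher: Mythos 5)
Your proof is correct, and for most of the proposition it follows the same route as the paper: items (i), (ii), the divisibility $n!\mid\sigma_n$ in (iii), and item (iv) are proved exactly as in the text (valuation by valuation, the geometric series $\sum_{k\geq 1}n/p^k=n/(p-1)$, and the parity argument for odd $p$ in (iv)). The one place where you genuinely diverge is the proof of $\sigma_n\mid(2n)!$. The paper deduces it from Corollary \ref{coll2}: writing $\sigma_n$ as $\lcm\{i_1i_2\cdots i_n \,;\, i_1+\cdots+i_n\leq 2n\}$ and observing that each such product divides $i_1!\,i_2!\cdots i_n!\mid(i_1+\cdots+i_n)!\mid(2n)!$. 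You instead stay entirely at the level of $p$-adic valuations, bounding $\vartheta_p(\sigma_n)=\lfloor n/(p-1)\rfloor$ by the single Legendre term $\lfloor 2n/p\rfloor\leq\vartheta_p((2n)!)$, which works because $\frac{2n}{p}-\frac{n}{p-1}=\frac{n(p-2)}{p(p-1)}\geq 0$ for every prime $p$ and the floor function is monotone. Your argument is more elementary and self-contained, since it does not invoke the $\lcm$ machinery of Section 2 at all; the paper's version, on the other hand, is a deliberate illustration of how the combinatorial description of $\sigma_n$ from Corollary \ref{coll2} can be put to work. Both are complete and correct.
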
 

\begin{proof}
Let $n \in \N$ be fixed. The properties of Item (i) are trivial. The property of Item (ii) immediately follows from the Legendre formula providing the decomposition of $n!$ into a product of prime factors. For Item (iii), the fact that $n! \mid \sigma_n$ follows from the inequality:
$$
\frac{n}{p - 1} = \frac{n}{p} + \frac{n}{p^2} + \frac{n}{p^3} + \dots \geq \left\lfloor\frac{n}{p}\right\rfloor + \left\lfloor\frac{n}{p^2}\right\rfloor + \left\lfloor\frac{n}{p^3}\right\rfloor + \dots
$$
together with the Legendre formula. Next, to prove that $\sigma_n \mid (2 n)!$, we use Corollary \ref{coll2}. For any $i_1 , i_2 , \dots , i_n \in \N^*$ satisfying $i_1 + i_2 + \dots + i_n \leq 2 n$, we have that $i_1 i_2 \cdots i_n \mid i_1! i_2! \cdots i_n! \mid (i_1 + i_2 + \dots + i_n)! \mid (2 n)!$. Thus $\lcm\{i_1 i_2 \cdots i_n ~;~ i_1 , i_2 , \dots , i_n$ $\in \N^* , i_1 + i_2 + \dots + i_n \leq 2 n\} \mid (2 n)!$; that is (according to Corollary \ref{coll2}): $\sigma_n \mid (2 n)!$. Let us finally prove Item (iv). First, we have $\vartheta_2(\sigma_{2 n + 1}) = \lfloor\frac{2 n + 1}{2 - 1}\rfloor = 2 n + 1$ and $\vartheta_2(2 \sigma_{2 n}) = 1 + \vartheta_2(\sigma_{2 n}) = 1 + \lfloor\frac{2 n}{2 - 1}\rfloor = 2 n + 1$; hence $\vartheta_2(\sigma_{2 n + 1}) = \vartheta_2(2 \sigma_{2 n})$. Next, for any odd prime $p$, since the odd number $(2 n + 1)$ cannot be a multiple of the even number $(p - 1)$ then we have
$$
\left\lfloor\frac{2 n + 1}{p - 1}\right\rfloor = \left\lfloor\frac{2 n}{p - 1}\right\rfloor ;
$$
that is
$$
\vartheta_p(\sigma_{2 n + 1}) = \vartheta_p(\sigma_{2 n}) = \vartheta_p(2 \sigma_{2 n}) .
$$
Consequently, we have $\vartheta_q(\sigma_{2 n + 1}) = \vartheta_q(2 \sigma_{2 n})$ (for any prime number $q$), concluding that $\sigma_{2 n + 1} = 2 \sigma_{2 n}$, as required. This completes the proof of the proposition.
\end{proof}

In the following proposition, we shall improve Item (iii) of Proposition \ref{p2}. It appears that this improvement is optimal (understanding that we use uniquely simple expressions).

\begin{prop}\label{p3}
For any natural number $n$, we have
$$
(n + 1)! ~\mid~ \sigma_n ~~\text{and}~~ \sigma_n ~\mid~ n! \, \lcm(1 , 2 , \dots , n , n + 1) .
$$
\end{prop}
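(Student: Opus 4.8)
The plan is to verify both divisibilities by comparing $p$-adic valuations prime by prime, using the two identities already at our disposal: the Legendre formula in the form \eqref{eq25}, $\vartheta_p(m!) = \frac{m - S_p(m)}{p-1}$, and formula \eqref{eq26}, $\vartheta_p(\lcm(1,2,\dots,m)) = \lfloor \frac{\log m}{\log p}\rfloor$. Throughout, I fix $n \in \N$ and a prime $p$, and I write $n = (p-1)Q + s$ with $Q := \lfloor \frac{n}{p-1}\rfloor$ and $0 \le s \le p-2$, so that $\vartheta_p(\sigma_n) = Q = \frac{n-s}{p-1}$. I will repeatedly use the elementary congruence $S_p(m) \equiv m \pmod{p-1}$ (immediate since $p \equiv 1 \pmod{p-1}$) together with the fact that $S_p(m) \ge 1$ for every $m \ge 1$. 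The case $n = 0$ is trivial, so I assume $n \ge 1$ below.

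For the first divisibility $(n+1)! \mid \sigma_n$, I reduce (via \eqref{eq25}) to proving $\frac{(n+1) - S_p(n+1)}{p-1} \le Q$ for every prime $p$, which, after substituting $Q = \frac{n-s}{p-1}$, is exactly the inequality $S_p(n+1) \ge s+1$. To establish it I note that $S_p(n+1)$ is a positive integer congruent to $n+1 \equiv s+1 \pmod{p-1}$; since $1 \le s+1 \le p-1$, the least positive integer in that residue class is $s+1$ itself, hence $S_p(n+1) \ge s+1$. (For $p=2$ this merely says $S_2(n+1) \ge 1$, so $p=2$ needs no special treatment.)

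For the second divisibility $\sigma_n \mid n!\,\lcm(1,\dots,n+1)$, I reduce (via \eqref{eq25} and \eqref{eq26}, and additivity of $\vartheta_p$ over products) to proving $Q \le \frac{n - S_p(n)}{p-1} + \lfloor\frac{\log(n+1)}{\log p}\rfloor$ for every prime $p$; after substituting $Q = \frac{n-s}{p-1}$ this becomes $t \le \lfloor\frac{\log(n+1)}{\log p}\rfloor$, where $t := \frac{S_p(n) - s}{p-1}$. First I would check that $t$ is a nonnegative integer: it is an integer because $S_p(n) \equiv s \pmod{p-1}$, and it is $\ge 0$ because $s$ is the least nonnegative representative of $n \bmod (p-1)$ while $S_p(n) \ge 0$ lies in the same residue class. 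Since $t \in \N$, the inequality to prove is equivalent to $p^t \le n+1$. Writing $n = \sum_{i=0}^{k} a_i p^i$ in base $p$ with $a_k \neq 0$, we have $k = \lfloor \frac{\log n}{\log p}\rfloor$, $p^k \le n$, and $n+1 \le p^{k+1}$; moreover $t \le \frac{S_p(n)}{p-1} = \frac{1}{p-1}\sum_{i=0}^k a_i \le k+1$. If $t \le k$, then $p^t \le p^k \le n < n+1$; if $t = k+1$, then $S_p(n) = (p-1)(k+1)$ and $s = 0$, which forces $a_0 = \dots = a_k = p-1$, i.e.\ $n = p^{k+1}-1$, and then $p^t = p^{k+1} = n+1$. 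In both cases $p^t \le n+1$, which is what we wanted.

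I expect the only genuinely delicate point to be the borderline case $t = k+1$ in the second part: one must observe that it can occur only for $n = p^{k+1}-1$, and that for such $n$ one still has $p^t = n+1$ — this is precisely where the factor $\lcm(1,\dots,n+1)$, rather than $\lcm(1,\dots,n)$, is indispensable. Everything else is routine bookkeeping with floor functions and base-$p$ digit sums via \eqref{eq25} and \eqref{eq26}.
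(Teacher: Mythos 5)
Your proof is correct, and it reaches the two valuation inequalities by a genuinely different mechanism than the paper. The paper also reduces Proposition \ref{p3} to a prime-by-prime comparison of $p$-adic valuations, but it keeps Legendre's formula in the form $\vartheta_p(m!)=\sum_{i\ge 1}\lfloor m/p^i\rfloor$ and proves the two sides of \eqref{eq8} by real-number estimates: the geometric sum $\sum_{i=1}^{e}(n+1)p^{-i}=\frac{n+1}{p-1}(1-p^{-e})$, the refined bound $\lfloor a/b\rfloor\ge \frac{a+1}{b}-1$, and an appeal to integrality at the end of each side. You instead use the digit-sum form \eqref{eq25} together with the congruence $S_p(m)\equiv m\pmod{p-1}$, reducing the first divisibility to ``$S_p(n+1)$ is a positive integer congruent to $s+1$ with $1\le s+1\le p-1$'' and the second to ``$p^t\le n+1$ for the nonnegative integer $t=\frac{S_p(n)-s}{p-1}\le k+1$.'' Both arguments are complete and of comparable length; what yours buys is sharper structural information, since it isolates exactly when the bounds are attained (notably $t=k+1$ forces $n=p^{k+1}-1$, which explains why the factor $\lcm(1,\dots,n+1)$ rather than $\lcm(1,\dots,n)$ is needed, and supports the paper's remark that the result is optimal), whereas the paper's route avoids digit manipulations entirely and stays within elementary floor-function analysis. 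One cosmetic remark: you invoke \eqref{eq26} for $\vartheta_p(\lcm(1,\dots,n+1))=\lfloor\log(n+1)/\log p\rfloor$, which is exactly the quantity the paper calls $e$, so the two proofs agree on that ingredient.
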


\begin{proof}
Let $n \in \N$ be fixed. We have to show that for any prime $p$, we have
\begin{equation}\label{eq7}
\vartheta_p\left((n + 1)!\right) \leq \vartheta_p\left(\sigma_n\right) \leq \vartheta_p\left(n! \, \lcm(1 , 2 , \dots , n , n + 1)\right) .
\end{equation}
Let $p$ be a fixed prime number and let us prove \eqref{eq7}. By setting $e$ the largest prime number satisfying $p^e \leq n + 1$, we have that $\vartheta_p(n!) = \sum_{i = 1}^{e} \left\lfloor\frac{n}{p^i}\right\rfloor$, $\vartheta_p((n + 1)!) = \sum_{i = 1}^{e} \left\lfloor\frac{n + 1}{p^i}\right\rfloor$ (according to the Legendre formula), $\vartheta_p(\sigma_n) = \left\lfloor\frac{n}{p - 1}\right\rfloor$ (by definition of $\sigma_n$), and $\vartheta_p\left(\lcm(1 , 2 , \dots , n + 1)\right) = e$. So \eqref{eq7} reduces to
\begin{equation}\label{eq8}
\sum_{i = 1}^{e} \left\lfloor\frac{n + 1}{p^i}\right\rfloor \leq \left\lfloor\frac{n}{p - 1}\right\rfloor \leq \sum_{i = 1}^{e} \left\lfloor\frac{n}{p^i}\right\rfloor + e .
\end{equation}
On the one hand, we have
$$
\sum_{i = 1}^{e} \left\lfloor\frac{n + 1}{p^i}\right\rfloor \leq \sum_{i = 1}^{e} \frac{n + 1}{p^i} = \frac{n + 1}{p - 1} \left(1 - \frac{1}{p^e}\right) \leq \frac{n}{p - 1}
$$
(since $p^e \leq n + 1$). But since $\sum_{i = 1}^{e} \left\lfloor\frac{n + 1}{p^i}\right\rfloor$ is an integer, we derive that
$$
\sum_{i = 1}^{e} \left\lfloor\frac{n + 1}{p^i}\right\rfloor \leq \left\lfloor\frac{n}{p - 1}\right\rfloor ,
$$
confirming the left inequality in \eqref{eq8}. On the other hand, by leaning on the refined inequality $\left\lfloor\frac{a}{b}\right\rfloor \geq \frac{a + 1}{b} - 1$, which holds for any positive integers $a , b$, we have
\begin{align*}
\left\lfloor\frac{n}{p - 1}\right\rfloor - \sum_{i = 1}^{e} \left\lfloor\frac{n}{p^i}\right\rfloor & \leq \frac{n}{p - 1} - \sum_{i = 1}^{e} \left(\frac{n + 1}{p^i} - 1\right) \\[1mm]
& = \frac{n}{p - 1} - \frac{n + 1}{p - 1} \left(1 - \frac{1}{p^e}\right) + e \\[1mm]
& = \frac{1}{p - 1} \left(\frac{n + 1}{p^e} - 1\right) + e .
\end{align*}
But from the definition of $e$, we have $p^{e + 1} > n + 1$, that is $\frac{n + 1}{p^e} < p$. By reporting this into the last estimate, we get
$$
\left\lfloor\frac{n}{p - 1}\right\rfloor - \sum_{i = 1}^{e} \left\lfloor\frac{n}{p^i}\right\rfloor < e + 1 .
$$
Next, since $\lfloor\frac{n}{p - 1}\rfloor - \sum_{i = 1}^{e} \lfloor\frac{n}{p^i}\rfloor \in\Z$, we conclude to
$$
\left\lfloor\frac{n}{p - 1}\right\rfloor - \sum_{i = 1}^{e} \left\lfloor\frac{n}{p^i}\right\rfloor \leq e ,
$$
confirming the right inequality of \eqref{eq8}. This completes this proof.
\end{proof}

From Proposition \ref{p3}, we derive an asymptotic estimate for the number $\log{\sigma_n}$ when $n$ tends to infinity. We have the following

\begin{coll}\label{coll3}
We have
$$
\log\sigma_n ~\sim_{+ \infty}~ n \log{n} .
$$
\end{coll}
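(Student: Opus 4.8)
The plan is to sandwich $\log\sigma_n$ between two quantities that are both asymptotic to $n\log n$, using Proposition \ref{p3}, and then conclude by the squeeze principle. By Proposition \ref{p3} we have the divisibilities $(n+1)! \mid \sigma_n$ and $\sigma_n \mid n!\,\lcm(1,2,\dots,n,n+1)$, hence, taking logarithms,
$$
\log\big((n+1)!\big) \leq \log\sigma_n \leq \log(n!) + \log\lcm(1,2,\dots,n,n+1) .
$$
So it suffices to understand the asymptotic behaviour of the three quantities appearing in these bounds.

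For the two factorial terms I would invoke Stirling's formula, which gives $\log(n!) = n\log n - n + O(\log n)$ and likewise $\log\big((n+1)!\big) = (n+1)\log(n+1) - (n+1) + O(\log n)$; in both cases the right-hand side is $n\log n + O(n)$, hence asymptotic to $n\log n$. For the $\lcm$ term I would use the estimate recalled in the Introduction: by the prime number theorem (indeed already by Chebyshev's estimate) one has $\log\lcm(1,2,\dots,m) = O(m)$, so in particular $\log\lcm(1,2,\dots,n,n+1) = O(n)$. Substituting these into the displayed inequality yields
$$
n\log n + O(n) ~\leq~ \log\sigma_n ~\leq~ n\log n + O(n) ,
$$
so that $\log\sigma_n = n\log n + O(n)$, which gives $\log\sigma_n \sim_{+\infty} n\log n$.

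There is no serious obstacle here: the argument is a routine squeeze, and the only non-elementary ingredient is the bound $\log\lcm(1,2,\dots,m) = O(m)$, which is Chebyshev's estimate quoted in the Introduction and is needed precisely to guarantee that the $\lcm$ contribution is of strictly smaller order than $n\log n$. (One could even get away with the crude bound $\lcm(1,2,\dots,m)\mid \prod_{p\le m} p^{\lfloor \log m/\log p\rfloor} \le m^{\pi(m)}$ together with any $\pi(m)=O(m/\log m)$ estimate.) Everything else reduces to Stirling's formula.
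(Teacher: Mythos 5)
Your proposal is correct and follows essentially the same route as the paper: sandwich $\log\sigma_n$ via Proposition \ref{p3}, then apply Stirling's formula to the factorial terms and the Chebyshev/prime-number-theorem bound $\log\lcm(1,2,\dots,n+1)=O(n)$ to the $\lcm$ term. The only cosmetic difference is that you track explicit $O(n)$ error terms where the paper simply quotes the asymptotic equivalences $\log(n!)\sim n\log n$ and $\log\lcm(1,\dots,n+1)\sim n$.
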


\begin{proof}
According to Proposition \ref{p3}, we have for any $n \in \N^*$:
$$
\log{(n + 1)!} \leq \log{\sigma_n} \leq \log{(n!)} + \log\lcm(1 , 2 , \dots , n , n + 1) .
$$
Then the asymptotic estimate of the corollary follows from the facts: $\log{(n + 1)!}$ $\sim_{+ \infty} \log(n!) \sim_{+ \infty} n \log{n}$ (according to Stirling's formula) and $\log\lcm(1 , 2 , \dots , n ,$ \linebreak $n + 1) \sim_{+ \infty} n$ (according to the prime number theorem).
\end{proof}

Note that the asymptotic estimate of the above corollary will be specified in §\ref{sec4}.

We now turn to establish a result evaluating the $p$-adic valuations of the positive integers $\frac{\sigma_n}{n!}$ ($n \in \N^*$) for sufficiently large prime numbers. We discover as a remarkable phenomenon that primes of a special type play a vital role. We find again this phenomenon in §\ref{sec4} when estimating asymptotically $\log{\sigma_n}$. We have the following theorem:

\begin{thm}\label{t2}
Let $n$ be a positive integer and $p$ be a prime number such that:
$$
\sqrt{n + 1} < p \leq n + 1 .
$$
Then, we have
$$
\vartheta_p\left(\frac{\sigma_n}{n!}\right) \in \{0 , 1\} .
$$
Besides, the equality $\vartheta_p\left(\frac{\sigma_n}{n!}\right) = 1$ holds if and only if $p$ has the form
$$
p = \left\lfloor\frac{n}{k} + 1\right\rfloor ,
$$
with $k \in \N^*$ and $k < \sqrt{n + 1} + 1$.
\end{thm}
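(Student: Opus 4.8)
The plan is to reduce the whole statement to counting the integers lying in the half-open interval $\left(\frac{n}{p},\frac{n}{p-1}\right]$.

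First I would exploit the hypothesis $p>\sqrt{n+1}$, which forces $p^2>n+1>n$; hence in the Legendre formula for $\vartheta_p(n!)$ every term $\lfloor n/p^i\rfloor$ with $i\geq 2$ vanishes, so $\vartheta_p(n!)=\lfloor n/p\rfloor$, whereas $\vartheta_p(\sigma_n)=\lfloor n/(p-1)\rfloor$ directly from the definition of $\sigma_n$ (and $n!\mid\sigma_n$ by Proposition \ref{p2}, so $\sigma_n/n!$ is a positive integer). Consequently
$$
\vartheta_p\!\left(\frac{\sigma_n}{n!}\right)=\left\lfloor\frac{n}{p-1}\right\rfloor-\left\lfloor\frac{n}{p}\right\rfloor ,
$$
and, by the elementary identity $\lfloor b\rfloor-\lfloor a\rfloor=\#\{k\in\Z: a<k\leq b\}$ (valid for $a\leq b$), this is exactly the number of integers $k$ with $\frac{n}{p}<k\leq\frac{n}{p-1}$. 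So everything reduces to understanding that set of integers.

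For the bound $\vartheta_p(\sigma_n/n!)\leq 1$ I would argue by contradiction: if $\left(\frac{n}{p},\frac{n}{p-1}\right]$ contained two integers it would contain two consecutive ones, $k$ and $k+1$. Then $kp>n$ (from $k>n/p$) and $(k+1)(p-1)\leq n$ (from $k+1\leq n/(p-1)$), so $(k+1)(p-1)<kp$, which simplifies to $k>p-1$, i.e. $k\geq p$; hence $n\geq(k+1)(p-1)\geq(p+1)(p-1)=p^2-1>n$, using $p^2>n+1$ — a contradiction. Thus the count is $0$ or $1$, so $\vartheta_p(\sigma_n/n!)\in\{0,1\}$.

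It remains to characterise when the count equals $1$, i.e. when there is an integer $k$ with $\frac{n}{p}<k\leq\frac{n}{p-1}$; such a $k$ is automatically a positive integer since $k>n/p>0$. The crux is that, for $k\in\N^*$, the condition $\frac{n}{p}<k\leq\frac{n}{p-1}$ is equivalent to $p-1\leq\frac{n}{k}<p$, i.e. to $\lfloor\frac{n}{k}\rfloor=p-1$, i.e. to $p=\lfloor\frac{n}{k}\rfloor+1=\lfloor\frac{n}{k}+1\rfloor$; this yields at once both directions of the asserted equivalence with ``$p=\lfloor\frac{n}{k}+1\rfloor$ for some $k\in\N^*$''. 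For the extra constraint $k<\sqrt{n+1}+1$ in the forward implication, I would note that such a $k$ satisfies $k\leq\frac{n}{p-1}$, and since $p-1>\sqrt{n+1}-1>0$ (positivity holding because $n\geq 1$), we get $k\leq\frac{n}{p-1}<\frac{n}{\sqrt{n+1}-1}=\sqrt{n+1}+1$, the last step being the identity $(\sqrt{n+1}-1)(\sqrt{n+1}+1)=n$. Conversely, if $p=\lfloor\frac{n}{k}+1\rfloor$ with $k\in\N^*$ and $k<\sqrt{n+1}+1$, the equivalence above places $k$ inside $\left(\frac{n}{p},\frac{n}{p-1}\right]$, so the count is $\geq 1$, hence equal to $1$.

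The step I expect to be the real obstacle is the bound $\leq 1$: the crude estimate $\lfloor\frac{n}{p-1}\rfloor-\lfloor\frac{n}{p}\rfloor<\frac{n}{p(p-1)}+1$ does not force the difference to be $\leq 1$ once $n>p(p-1)$, which can occur within the allowed range (e.g. $n=7$, $p=3$), so one genuinely needs the ``two consecutive integers'' argument, which uses the full strength of $p^2>n+1$ rather than merely $p^2>n$. Everything else is routine bookkeeping with the floor function.
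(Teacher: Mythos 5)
Your proposal is correct, and it takes a genuinely different route from the paper. The paper computes $\vartheta_p\left(\frac{\sigma_n}{n!}\right)$ via the digit-sum form \eqref{eq25} of Legendre's formula, arriving at $\lfloor\frac{S_p(n)}{p-1}\rfloor$, and then manipulates the two base-$p$ digits $a_0, a_1$ of $n$ (the hypothesis $p>\sqrt{n+1}$ enters as ``$n$ has at most two digits and $(a_0,a_1)\neq(p-1,p-1)$''); each direction of the equivalence then requires its own construction, the converse being the more laborious one (rebuilding the base-$p$ expansion from $k$). You instead use $\vartheta_p(n!)=\lfloor\frac{n}{p}\rfloor$ (valid since $p^2>n$) to write $\vartheta_p\left(\frac{\sigma_n}{n!}\right)=\lfloor\frac{n}{p-1}\rfloor-\lfloor\frac{n}{p}\rfloor$ as the number of integers in $\left(\frac{n}{p},\frac{n}{p-1}\right]$, after which the single equivalence $\frac{n}{p}<k\leq\frac{n}{p-1}\Longleftrightarrow\lfloor\frac{n}{k}\rfloor=p-1\Longleftrightarrow p=\lfloor\frac{n}{k}+1\rfloor$ delivers both directions at once, with the constraint $k<\sqrt{n+1}+1$ falling out of $k\leq\frac{n}{p-1}<\frac{n}{\sqrt{n+1}-1}$. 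Your consecutive-integers contradiction for the bound $\leq 1$ is sound ($k\geq p$ forces $n\geq p^2-1>n$), and your closing remark is apt: the crude length estimate on the interval does not suffice, and the full strength of $p>\sqrt{n+1}$ is needed — exactly as in the paper, where it manifests as $(a_0,a_1)\neq(p-1,p-1)$. What your approach buys is twofold: it dispenses with base-$p$ expansions entirely, and it makes transparent the link with the analytic part of the paper, since the equivalence $\lfloor\frac{n}{k}+1\rfloor=p\Longleftrightarrow\frac{n}{p}<k\leq\frac{n}{p-1}$ is precisely the observation used later in the proof of Proposition \ref{p5}; what the paper's digit-sum route buys is a formula for $\vartheta_p\left(\frac{\sigma_n}{n!}\right)$ valid for all primes, which is reused in the concluding section for the small primes $p\leq\sqrt{n+1}$.
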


\begin{proof}
By the definition of $\sigma_n$ and the Legendre formula \eqref{eq25}, we have that
\begin{align}
\vartheta_p\left(\frac{\sigma_n}{n!}\right) & = \vartheta_p\left(\sigma_n\right) - \vartheta_p\left(n!\right) \notag \\[1mm]
& = \left\lfloor\frac{n}{p - 1}\right\rfloor - \frac{n - S_p(n)}{p - 1} \notag \\[1mm]
& = \left\lfloor\frac{n}{p - 1} - \frac{n - S_p(n)}{p - 1}\right\rfloor ~~~~~~~~~~ \left(\text{since } \frac{n - S_p(n)}{p - 1} = \vartheta_p(n!) \in \Z\right) \notag \\[1mm]
& = \left\lfloor\frac{S_p(n)}{p - 1}\right\rfloor . \label{eq9}
\end{align}
The first part of the theorem is then equivalent to the fact $\left\lfloor\frac{S_p(n)}{p - 1}\right\rfloor \in \{0 , 1\}$. So, let us prove this last fact. The hypothesis on $p$ insures that $n < p^2 - 1$, which implies that the representation of the positive integer $n$ in base $p$ has the form $n = \overline{a_1 a_0}_{(p)}$, with $a_0 , a_1 \in \{0 , 1 , \dots , p - 1\}$ and $(a_0 , a_1) \neq (p - 1 , p - 1)$. Consequently, we have $S_p(n) = a_0 + a_1 < 2 (p - 1)$, implying that $\frac{S_p(n)}{p - 1} < 2$; hence $\left\lfloor\frac{S_p(n)}{p - 1}\right\rfloor \in \{0 , 1\}$, as required. This achieves the proof of the first part of the theorem. Now, let us prove the second part of the theorem. \\
\textbullet{} Suppose that $\vartheta_p\left(\frac{\sigma_n}{n!}\right) = 1$ and let us show the existence of $k \in \N^*$, with $k < \sqrt{n + 1} + 1$ such that $p = \left\lfloor\frac{n}{k} + 1\right\rfloor$. As seen above, the representation of $n$ in base $p$ has the form $n = \overline{a_1 a_0}_{(p)} = a_0 + p a_1$, where $a_0 , a_1 \in \{0 , 1 , \dots , p - 1\}$ and $(a_0 , a_1) \neq (p - 1 , p - 1)$. We will show that $k = a_1 + 1$ is suitable. According to \eqref{eq9}, we have $\vartheta_p\left(\frac{\sigma_n}{n!}\right) = \left\lfloor\frac{S_p(n)}{p - 1}\right\rfloor = \left\lfloor\frac{a_0 + a_1}{p - 1}\right\rfloor$. So the supposition $\vartheta_p\left(\frac{\sigma_n}{n!}\right) = 1$ implies that $\frac{a_0 + a_1}{p - 1} \geq 1$, that is $a_0 + a_1 \geq p - 1$. This last inequality together with $a_0 < p$ imply that
$$
p - 1 \leq \frac{a_0 + a_1 p}{a_1 + 1} < p ,
$$
which is equivalent to
$$
\left\lfloor\frac{n}{a_1 + 1}\right\rfloor = p - 1 .
$$
Thus
$$
p = \left\lfloor\frac{n}{a_1 + 1} + 1\right\rfloor .
$$
Besides, we have $a_1 = \left\lfloor\frac{n}{p}\right\rfloor \leq \frac{n}{p} < \sqrt{n + 1}$ (since $p > \sqrt{n + 1} > \frac{n}{\sqrt{n + 1}}$). Thus $k = a_1 + 1$ satisfy the required properties (i.e., $p = \left\lfloor\frac{n}{k} + 1\right\rfloor$ and $k < \sqrt{n + 1} + 1$). \\
\textbullet{} Conversely, suppose that there exists $k \in \N^*$, with $k < \sqrt{n + 1} + 1$, such that $p = \left\lfloor\frac{n}{k} + 1\right\rfloor$, and let us show that $\vartheta_p\left(\frac{\sigma_n}{n!}\right) = 1$. Setting $a_0 := n - (k - 1) p$ and $a_1 := k - 1$, we first show that the representation of $n$ in base $p$ is $n = \overline{a_1 a_0}_{(p)}$. Since it is immediate that $n = a_0 + p a_1$, it just remains to prove that $a_0 , a_1 \in \{0 , 1 , \dots , p - 1\}$. Since $k < \sqrt{n + 1} + 1 < p + 1$ then $k - 1 < p$; that is $a_1 \in \{0 , 1 , \dots , p - 1\}$. Next, since $p = \left\lfloor\frac{n}{k} + 1\right\rfloor$ then
$$
p \leq \frac{n}{k} + 1 < p + 1 ,
$$
implying that
$$
p - k \leq n - (k - 1) p < p ,
$$
that is
$$
p - k \leq a_0 < p .
$$
But $p - k = (p - 1) - a_1 \geq 0$; thus $a_0 \in \{0 , 1 , \dots , p - 1\}$. We have confirmed that the representation of $n$ in base $p$ is $n = \overline{a_1 a_0}_{(p)}$. Consequently, we have (according to \eqref{eq9}):
$$
\vartheta_p\left(\frac{\sigma_n}{n!}\right) = \left\lfloor\frac{S_p(n)}{p - 1}\right\rfloor = \left\lfloor\frac{a_0 + a_1}{p - 1}\right\rfloor = \left\lfloor\frac{n - (k - 1)(p - 1)}{p - 1}\right\rfloor = \left\lfloor\frac{n}{p - 1}\right\rfloor - k + 1 .
$$
Then, since $\frac{n}{p - 1} \geq k$ (because $\frac{n}{k} + 1 \geq \left\lfloor\frac{n}{k} + 1\right\rfloor = p$), it follows that $\vartheta_p\left(\frac{\sigma_n}{n!}\right) \geq 1$. But since $\vartheta_p\left(\frac{\sigma_n}{n!}\right) \in \{0 , 1\}$ (according to the first part, already proved, of the theorem), we conclude that $\vartheta_p\left(\frac{\sigma_n}{n!}\right) = 1$, as required. This completes the proof of the theorem.
\end{proof}

\section[Analytic estimates]{Analytic estimates of the numbers $\log{\rho_n}$ and $\log{\sigma_n}$}\label{sec4}
Throughout this section, we let $\c$ denote the absolute positive constant given by:
$$
\c := \sum_{p \text{ prime}} \frac{\log{p}}{p (p - 1)} = 0.755\dots .
$$
Our goal is to find asymptotic estimates for $\log{\rho_n}$ and $\log{\sigma_n}$ as $n$ tends to infinity. The obtained main results are the following:
\begin{thm}\label{t3}
We have
$$
\log{\rho_n} = n \log{n} - (\c + 1) n + O\left(\sqrt{n}\right) .
$$
\end{thm}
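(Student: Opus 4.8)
The plan is to reduce the problem to $\log n!$ via Legendre's formula, and then estimate the (easy) contribution of the higher prime powers. Since $\rho_n = \prod_p p^{\lfloor n/p\rfloor}$ while, by the Legendre formula \eqref{eq24}, $n! = \prod_p p^{\sum_{i\ge 1}\lfloor n/p^i\rfloor}$, taking logarithms gives $\log\rho_n = \log n! - R_n$ with $R_n := \sum_{i\ge 2}\sum_{p^i\le n}\lfloor n/p^i\rfloor\log p$ (a finite sum, over prime powers $p^i\le n$ with $i\ge 2$). Invoking Stirling's formula in the weak form $\log n! = n\log n - n + O(\log n)$, the theorem reduces to showing $R_n = \c n + O(\sqrt n)$.

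To estimate $R_n$, first drop the integer parts: the total error incurred is at most $\sum_{i\ge 2}\sum_{p^i\le n}\log p = \theta(\sqrt n) + \sum_{3\le i\le \log_2 n}\theta(n^{1/i})$, which is $O(\sqrt n)$ by Chebyshev's estimate $\theta(x) = O(x)$ (the terms with $i\ge 3$ contributing only $O(n^{1/3}\log n)$). Hence $R_n = n\sum_{i\ge 2}\sum_{p^i\le n}\frac{\log p}{p^i} + O(\sqrt n) = \c n - n\,T_n + O(\sqrt n)$, where $T_n := \sum_{i\ge 2}\sum_{p^i>n}\frac{\log p}{p^i}$ is the tail of the convergent double series whose sum is $\c$. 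It then remains to prove $T_n = O(1/\sqrt n)$, so that $nT_n = O(\sqrt n)$.

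For the tail I would treat $i = 2$ and $i\ge 3$ separately. Partial summation against $\theta(t) = O(t)$ gives $\sum_{p>\sqrt n}\frac{\log p}{p^2} = -\frac{\theta(\sqrt n)}{n} + 2\int_{\sqrt n}^{\infty}\frac{\theta(t)}{t^3}\,dt = O(1/\sqrt n)$. For $i\ge 3$, split the primes with $p^i>n$ according to whether $p\le n^{1/3}$ or $p>n^{1/3}$: in the first range, if $i_1(p)$ denotes the least $i$ with $p^i>n$ then $\sum_{i\ge i_1}\frac{\log p}{p^i}\le \frac{2\log p}{p^{i_1}}<\frac{2\log p}{n}$, so these primes contribute $O\big(\frac{\theta(n^{1/3})}{n}\big) = O(n^{-2/3})$; in the second range all $i\ge 3$ satisfy $p^i>n$ automatically and $\sum_{i\ge 3}\frac{\log p}{p^i}\le \frac{2\log p}{p^3}$, with $\sum_{p>n^{1/3}}\frac{\log p}{p^3} = O(n^{-2/3})$ again by partial summation against $\theta$. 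Thus $T_n = O(1/\sqrt n)$, whence $R_n = \c n + O(\sqrt n)$ and finally $\log\rho_n = n\log n - n - \c n + O(\sqrt n) = n\log n - (\c+1)n + O(\sqrt n)$.

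The main obstacle is precisely this tail estimate: one must obtain $T_n = O(1/\sqrt n)$, not merely $O(\log n/\sqrt n)$ (which a crude comparison of $\sum_{p>\sqrt n}\frac{\log p}{p^2}$ with the analogous sum over all integers would give), since after multiplication by $n$ the weaker bound would produce an unacceptable error of size $\sqrt n\log n$; this is where Chebyshev's bound $\theta(x)=O(x)$ must be brought in, together with the case split on $i$. It is also worth noting that one should not route the argument through Mertens' estimate for $\sum_{p\le n}\frac{\log p}{p}$: that would introduce the quantity $\sum_{p\le n}\{n/p\}\log p$, whose evaluation to the precision needed here is no easier than the original problem, whereas keeping $\log n!$ intact makes the whole computation elementary.
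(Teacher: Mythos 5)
Your proof is correct and follows essentially the same route as the paper: reduce to $\log n!$ via Legendre's formula, apply Stirling, and show the higher-prime-power sum equals $\c n + O(\sqrt n)$ by dropping the integer parts (an $O(\sqrt n)$ error counted via Chebyshev) and bounding the tail of the series for $\c$ by Abel summation against $\theta(t)=O(t)$ — this is precisely the content of the paper's Proposition \ref{p10} and Lemma \ref{l2}. The only difference is bookkeeping: you organize the tail as one double sum split by $i=2$ versus $i\ge 3$, whereas the paper splits by $p\le\sqrt n$ versus $p>\sqrt n$; the underlying estimates are the same.
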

\begin{thm}\label{t4}
We have
$$
\log{\sigma_n} = n \log{n} - n + O\left(\sqrt{n \log{n}}\right) .
$$
\end{thm}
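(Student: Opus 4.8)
The plan is to reduce the claim to an estimate for $\sigma_n/n!$ and then exploit Theorem~\ref{t2}. Write $\log\sigma_n = \log(n!) + \log(\sigma_n/n!)$. Stirling's formula gives $\log(n!) = n\log n - n + O(\log n)$, which supplies the main term $n\log n - n$ with an error well inside $O(\sqrt{n\log n})$; so it remains to prove $\log(\sigma_n/n!) = O(\sqrt{n\log n})$. By Proposition~\ref{p2}(iii) the quotient $\sigma_n/n!$ is a positive integer, and the computation \eqref{eq9} in the proof of Theorem~\ref{t2} gives $\vartheta_p(\sigma_n/n!) = \lfloor S_p(n)/(p-1)\rfloor$ for every prime $p$ (this being $0$ as soon as $p > n+1$, since then $S_p(n) = n < p-1$). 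Hence
\[
\log\frac{\sigma_n}{n!} \;=\; \sum_{p \le n+1} \left\lfloor \frac{S_p(n)}{p-1} \right\rfloor \log p ,
\]
and I would split this sum according to whether $p \le \sqrt{n+1}$ or $\sqrt{n+1} < p \le n+1$.

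For the small primes $p \le \sqrt{n+1}$: the base-$p$ expansion of $n$ has at most $\lfloor\log_p n\rfloor + 1$ digits, each $\le p-1$, so $S_p(n) \le (p-1)(\lfloor\log_p n\rfloor+1)$ and therefore $\lfloor S_p(n)/(p-1)\rfloor\log p \le (\lfloor\log_p n\rfloor+1)\log p \le \log n + \log p$. Summing over $p \le \sqrt{n+1}$ and invoking Chebyshev's estimates $\pi(\sqrt{n+1}) = O(\sqrt n/\log n)$ and $\theta(\sqrt{n+1}) = O(\sqrt n)$ bounds this part by $O(\sqrt n)$, which is $O(\sqrt{n\log n})$.

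For the large primes $\sqrt{n+1} < p \le n+1$: by Theorem~\ref{t2} each summand $\lfloor S_p(n)/(p-1)\rfloor\log p$ equals $\log p$ when $p$ has the form $\lfloor n/k+1\rfloor$ for some $k\in\N^*$ with $k < \sqrt{n+1}+1$, and $0$ otherwise; let $\mathscr{S}$ denote the (finite) set of such primes $p$. The remaining task is to show $\sum_{p\in\mathscr{S}}\log p = O(\sqrt{n\log n})$, and here I would fix a threshold $T$ and estimate $\mathscr{S}$ in two complementary ways. On the one hand, $\sum_{p\in\mathscr{S},\,p\le T}\log p \le \theta(T) = O(T)$ by Chebyshev's estimate. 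On the other hand, if $p\in\mathscr{S}$ and $p > T$ then $p = \lfloor n/k+1\rfloor$ gives $n/k+1 \ge p > T$, hence $k < n/(T-1)$; so there are fewer than $n/(T-1)$ such primes, each at most $n+1$, contributing at most $\frac{n}{T-1}\log(n+1)$. Thus $\sum_{p\in\mathscr{S}}\log p = O\!\left(T + \tfrac{n\log n}{T}\right)$, and the choice $T = \sqrt{n\log n}$ makes both terms $O(\sqrt{n\log n})$.

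Combining the three contributions yields $\log\sigma_n = n\log n - n + O(\sqrt{n\log n})$. The one genuinely delicate point is the large-prime sum: the primes of the form $\lfloor n/k+1\rfloor$ cannot be counted precisely by elementary means (this is exactly the difficulty flagged in the introduction, responsible for the conjectural refinement), so the proof deliberately sacrifices precision by replacing an honest count on the initial segment $p \le T$ with the crude bound $\theta(T) = O(T)$; the optimization $T = \sqrt{n\log n}$ then forces the extra factor $\sqrt{\log n}$ in the error term, as compared with the sharper $O(\sqrt n)$ available for $\log\rho_n$ in Theorem~\ref{t3}.
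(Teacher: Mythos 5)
Your proof is correct, but it takes a genuinely different route from the one the paper uses for Theorem \ref{t4}. The paper's official proof passes through $\log\rho_n$: it first establishes $\log{\rho_n} = n\log{n} - (\c+1)n + O(\sqrt{n})$ (Theorem \ref{t3}, which requires Lemma \ref{l2} and Proposition \ref{p10} to produce the constant $\c$), then computes the difference $\log\sigma_n - \log\rho_n$ via the Chebyshev-function identities of Proposition \ref{p4}, splits it into $\mathcal{S}_1(n) + \mathcal{S}_2(n)$ at $k = \sqrt{n}$, shows $\mathcal{S}_2(n) = \c\, n + O(\sqrt{n})$ (which cancels the $-\c\, n$ coming from Theorem \ref{t3}), and bounds $\mathcal{S}_1(n)$ by $(\card\mathscr{A}_n)\cdot O(\log n)$ with $\card\mathscr{A}_n = O(\sqrt{n/\log n})$. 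You instead write $\log\sigma_n = \log(n!) + \log(\sigma_n/n!)$ and control the second term prime-by-prime via the valuation formula $\vartheta_p(\sigma_n/n!) = \lfloor S_p(n)/(p-1)\rfloor$ (which, as you correctly note, holds for every prime, the hypothesis $\sqrt{n+1} < p \le n+1$ in Theorem \ref{t2} being used only to pin the value to $\{0,1\}$) together with the characterization from Theorem \ref{t2}. This bypasses the constant $\c$, Lemma \ref{l2}, Propositions \ref{p10}--\ref{p6}, and Theorem \ref{t3} entirely; it is in fact a rigorous version of the sketch the authors give in their concluding section, where they remark that by the arithmetic route ``the constant $\c$ does remarkably not appear.'' The two arguments converge at the same bottleneck --- bounding a sum of $\log p$ over primes of the form $\lfloor n/k+1\rfloor$ with $k$ small --- and both resolve it by the same threshold-plus-Chebyshev device (your optimization $T + n\log n/T$ at $T=\sqrt{n\log n}$ is the same trade-off as the paper's $\sqrt{n}/t + \sqrt{n}\,t/\log n$ at $t=\sqrt{\log n}$, just applied to the weighted sum rather than to the count). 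What your route buys is economy and self-containedness for Theorem \ref{t4} alone; what the paper's route buys is the standalone estimate for $\log\rho_n$ with its explicit constant, which your argument does not produce.
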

\begin{conj}[improving Theorem \ref{t4}]\label{conj1}
We have
$$
\log{\sigma_n} = n \log{n} - n + O\left(\sqrt{n}\right) .
$$
\end{conj}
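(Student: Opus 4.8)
The approach is to write $\log\sigma_n = \log n! + \log(\sigma_n/n!)$, to handle $\log n!$ by Stirling's formula ($\log n! = n\log n - n + O(\log n)$), and — using $n!\mid\sigma_n$ from Proposition~\ref{p2}, so that $\log(\sigma_n/n!)\ge 0$ — to reduce Conjecture~\ref{conj1} to the one-sided bound $\log(\sigma_n/n!) = O(\sqrt n)$ (the corresponding one-sided bound $O(\sqrt{n\log n})$ being what is behind Theorem~\ref{t4}). From $\vartheta_p(\sigma_n/n!) = \left\lfloor\frac{S_p(n)}{p-1}\right\rfloor$ (established inside the proof of Theorem~\ref{t2}), I would split this sum according to whether $p\le\sqrt{n+1}$ or $\sqrt{n+1}<p\le n+1$. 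The small primes are harmless: a base-$p$ representation of $n$ has at most $1+\frac{\log n}{\log p}$ digits, so $S_p(n)\le(p-1)\left(1+\frac{\log n}{\log p}\right)$, hence $\left\lfloor\frac{S_p(n)}{p-1}\right\rfloor\log p\le\log p+\log n$, and the whole small-prime contribution is at most $\theta(\sqrt{n+1})+\pi(\sqrt{n+1})\log n = O(\sqrt n)$ by Chebyshev's estimates. (Since for generic $n$ this part is already $\asymp\sqrt n$, the exponent $\frac12$ cannot be improved in the conjecture, and the real content lies with the large primes.)

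By Theorem~\ref{t2}, on the range $\sqrt{n+1}<p\le n+1$ we have $\vartheta_p(\sigma_n/n!)\in\{0,1\}$, the value $1$ being attained exactly when $p=\left\lfloor\frac nk+1\right\rfloor$ for some integer $k$ with $1\le k<\sqrt{n+1}+1$. Since distinct primes of this shape force distinct values of $k$, the large-prime contribution to $\log(\sigma_n/n!)$ is at most
\[
\sum_{1\le k<\sqrt{n+1}+1}\mathbf{1}_{\mathscr P}\!\left(\left\lfloor\tfrac{n}{k}+1\right\rfloor\right)\log\left\lfloor\tfrac{n}{k}+1\right\rfloor\ \le\ \sum_{1\le k<\sqrt{n+1}+1}\Lambda\!\left(\left\lfloor\tfrac{n}{k}+1\right\rfloor\right),
\]
where $\Lambda$ is the von Mangoldt function. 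Thus Conjecture~\ref{conj1} would follow from the estimate $\sum_{k\le\sqrt n}\Lambda\bigl(\lfloor\frac nk+1\rfloor\bigr)=O(\sqrt n)$ (the $O(1)$ boundary terms contributing only $O(\log n)$). The trivial bound — at most $\sqrt n$ summands, each $\le\log(n+1)$ — gives only $O(\sqrt n\log n)$, which is precisely the loss in Theorem~\ref{t4}; what is needed is a saving of a full factor $\log n$.

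That saving is the main obstacle, and it is where unconditional methods run out. Heuristically it must be present: in a Cram\'er-type model $\left\lfloor\frac nk+1\right\rfloor$ is prime with ``probability'' $\asymp 1/\log(n/k)$, so splitting $k$ into dyadic blocks $[2^j,2^{j+1})$ — on which $\log(n/k)$ and the size of $\lfloor\frac nk+1\rfloor$ are essentially constant — the expected contribution of a block is $\asymp\frac{2^j}{\log(n/2^j)}\cdot\log(n/2^j)=2^j$, and summing over $0\le j\le\frac12\log_2 n$ yields $\asymp\sqrt n$. Making this rigorous requires genuine control over the primes in the sparse ``hyperbolic'' sequence $\bigl(\lfloor\frac nk+1\rfloor\bigr)_{k\le\sqrt n}$ — for instance an upper-bound sieve giving $\#\{k\le\sqrt n:\lfloor\frac nk+1\rfloor\in\mathscr P\}=O(\sqrt n/\log n)$, which would in turn require equidistribution of $\lfloor\frac nk\rfloor$ in residue classes (a Bombieri--Vinogradov-type input, uniform in $n$), or else a direct asymptotic for $\sum_{k\le\sqrt n}\Lambda(\lfloor\frac nk+1\rfloor)$. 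Sums of the general form $\sum_{k\le x}g(\lfloor x/k\rfloor)$ have been studied, but the known unconditional results do not seem to reach the clean $O(\sqrt n)$ error term uniformly in $n$. This is why Theorem~\ref{t4} remains the best statement we can prove, the sharper one being offered only as a conjecture, supported by the heuristic above, by numerical evidence, and by the fact that it would follow from standard conjectures on the distribution of primes.
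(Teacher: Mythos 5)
Your reduction is correct, and it lands on the same irreducible obstacle as the paper: Conjecture \ref{conj1} is only ``proved'' in the paper conditionally on Conjecture \ref{conj2}, the statement that $\card\mathscr{A}_n\asymp\sqrt{n}/\log n$, and your argument reduces it to exactly that counting problem. The route is genuinely different, though. The paper's official derivation is analytic: it writes $\log\rho_n=\sum_{k\le n}\theta(n/k)$ and $\log\sigma_n=\sum_{k\le n}\theta(n/k+1)$ (Proposition \ref{p4}), so that $\log\sigma_n-\log\rho_n$ becomes a sum of $\log\lfloor n/k+1\rfloor$ over the $k$ for which $\lfloor n/k+1\rfloor$ is prime; the range $k>\sqrt n$ contributes $\c\,n+O(\sqrt n)$, which exactly cancels the constant $\c$ appearing in the estimate $\log\rho_n=n\log n-(\c+1)n+O(\sqrt n)$ of Theorem \ref{t3}, leaving Proposition \ref{p7} and hence the dependence on $\card\mathscr{A}_n$. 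Your derivation is arithmetic: you factor out $n!$, use $\vartheta_p(\sigma_n/n!)=\lfloor S_p(n)/(p-1)\rfloor$, bound the small primes by the digit count ($O(\sqrt n)$ total), and invoke Theorem \ref{t2} for the large primes. This is precisely the alternative sketched in the paper's final section, where the authors note that it reproduces the same bounds and has the aesthetic advantage that the constant $\c$ never appears. Your version buys a more self-contained reduction (no need for Theorem \ref{t3}, Lemma \ref{l2}, or the $\mathcal{S}_2$ computation); the paper's version buys the intermediate result on $\log\rho_n$, which is of independent interest. Both end at the same wall, and your heuristic for why the count should be $O(\sqrt n/\log n)$ matches the paper's.

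One small inaccuracy in your discussion: the trivial bound on the large-prime sum is $O(\sqrt n\log n)$, but that is \emph{not} ``precisely the loss in Theorem \ref{t4}.'' Theorem \ref{t4} achieves the better error $O(\sqrt{n\log n})$, because Proposition \ref{p8} already improves the trivial count $\card\mathscr{A}_n=O(\sqrt n)$ to $O(\sqrt{n/\log n})$ by splitting the range of $k$ at $\sqrt n/t$ with $t\asymp\sqrt{\log n}$ and applying Chebyshev's bound $\pi(x)=O(x/\log x)$ to the primes $\lfloor n/k+1\rfloor\le\sqrt n\,t+1$ arising from the large-$k$ part. So an unconditional saving of $\sqrt{\log n}$ over trivial is available; it is only the final $\sqrt{\log n}$ that requires the conjecture. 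If you want to match Theorem \ref{t4} rather than merely the trivial bound, you should incorporate that optimization into your large-prime estimate.
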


Note that an explanation for the validity of Conjecture \ref{conj1} is given latter; actually, it depends on a conjecture on counting the prime numbers of a certain form, which is heuristically plausible. To establish the above results, we need the following auxiliary results: 

\begin{lemma}\label{l2}
For any $x \geq 1$, we have
$$
\sum_{\begin{subarray}{c}
p \text{ prime} \\
p > x
\end{subarray}} \frac{\log{p}}{p (p - 1)} = O\left(\frac{1}{x}\right) .
$$
\end{lemma}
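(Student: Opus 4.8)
The plan is to reduce the given series to the simpler one $\sum_{p > x} \frac{\log p}{p^2}$ and then estimate that by partial summation against Chebyshev's bound $\theta(t) = O(t)$. The first step is trivial: since $\frac{1}{p - 1} \leq \frac{2}{p}$ for every prime $p \geq 2$, we have
$$
\sum_{\substack{p \text{ prime} \\ p > x}} \frac{\log{p}}{p (p - 1)} \leq 2 \sum_{\substack{p \text{ prime} \\ p > x}} \frac{\log{p}}{p^2} ,
$$
so it suffices to prove that $\sum_{p > x} \frac{\log{p}}{p^2} = O(1/x)$.

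For the main estimate I would use Abel summation. Writing $\theta$ for the Chebyshev theta function and applying partial summation to the coefficients $\log p$ (supported on the primes) with the $C^1$ weight $t \mapsto t^{-2}$, we get for every $y > x \geq 1$:
$$
\sum_{\substack{p \text{ prime} \\ x < p \leq y}} \frac{\log{p}}{p^2} = \frac{\theta(y)}{y^2} - \frac{\theta(x)}{x^2} + 2 \int_x^y \frac{\theta(t)}{t^3} \, dt .
$$
Now invoke Chebyshev's estimate $\theta(t) = O(t)$ (recalled in the introduction). It makes the first term tend to $0$ as $y \to +\infty$, makes the second term $O(1/x)$, and bounds the integral by $O\!\left(\int_x^{+\infty} t^{-2}\, dt\right) = O(1/x)$. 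Letting $y \to +\infty$ therefore yields $\sum_{p > x} \frac{\log{p}}{p^2} = O(1/x)$, and combining with the first step completes the proof.

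There is no genuine obstacle here, but one point deserves care: the naive comparison $\sum_{p > x} \frac{\log{p}}{p^2} \leq \sum_{n > x} \frac{\log{n}}{n^2}$ only delivers $O\!\left(\frac{\log x}{x}\right)$, so saving the logarithmic factor really does require an input about the density of primes such as Chebyshev's estimate. If one prefers to avoid partial summation, the same conclusion follows from a dyadic decomposition: on each block $(2^k, 2^{k+1}]$ with $2^k \geq x$ the numerators sum to $\theta(2^{k+1}) - \theta(2^k) = O(2^k)$ while each denominator exceeds $2^{2k}$, so the block contributes $O(2^{-k})$, and summing this geometric series over the relevant $k$ again gives $O(1/x)$.
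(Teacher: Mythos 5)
Your proposal is correct and follows essentially the same route as the paper: the same reduction via $\tfrac{1}{p-1}\leq\tfrac{2}{p}$, then Abel summation against $\theta(t)=O(t)$ to get the $O(1/x)$ bound. The dyadic-block alternative you mention at the end is a nice elementary variant, but the main argument matches the paper's proof.
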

\begin{proof}
Since $\frac{\log{p}}{p (p - 1)} \leq 2 \frac{\log{p}}{p^2}$ (for any prime number $p$), then it suffices to show that $\sum_{p \text{ prime, } p > x} \frac{\log{p}}{p^2} = O\left(\frac{1}{x}\right)$. According to the Abel summation formula (see e.g., \cite[Proposition 1.4]{kon}), we have for any positive real numbers $x , y$, with $x < y$:
\begin{align*}
\sum_{\begin{subarray}{c}
p \text{ prime} \\
x < p \leq y
\end{subarray}} \frac{\log{p}}{p^2} & = \left(\sum_{\begin{subarray}{c}
p \text{ prime} \\
x < p \leq y
\end{subarray}} \log{p}\right) \frac{1}{y^2} - \bigints_{x}^{y} \left(\sum_{\begin{subarray}{c}
p \text{ prime} \\
x < p \leq t
\end{subarray}} \log{p}\right) \left(\frac{1}{t^2}\right)' \, d t \\[2mm]
& = \frac{\theta(y) - \theta(x)}{y^2} + 2 \int_{x}^{y} \frac{\theta(t) - \theta(x)}{t^3} \, d t .
\end{align*}
Then, by setting $y$ to infinity, it follows (since $\theta(y) = O(y)$) that:
$$
\sum_{\begin{subarray}{c}
p \text{ prime} \\
p > x
\end{subarray}} \frac{\log{p}}{p^2} = 2 \int_{x}^{+ \infty} \frac{\theta(t) - \theta(x)}{t^3} \, d t = 2 \int_{x}^{+ \infty} \frac{\theta(t)}{t^3} \, d t - \frac{\theta(x)}{x^2} .
$$
Using finally $\theta(t) = O(t)$, we get
$$
\sum_{\begin{subarray}{c}
p \text{ prime} \\
p > x
\end{subarray}} \frac{\log{p}}{p^2} = O\left(\int_{x}^{+ \infty} \frac{d t}{t^2}\right) + O\left(\frac{1}{x}\right) = O\left(\frac{1}{x}\right) ,
$$
as required. The proof is complete.
\end{proof}

Lemma \ref{l2} above is used in the proof of the following proposition:

\begin{prop}\label{p10}
For any positive integer $n$, we have
$$
\sum_{p \text{ prime}} \left(\left\lfloor\frac{n}{p^2}\right\rfloor + \left\lfloor\frac{n}{p^3}\right\rfloor + \dots\right) \log{p} = \c \cdot n + O\left(\sqrt{n}\right) .
$$
\end{prop}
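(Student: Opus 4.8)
The plan is to start from the observation that the double series is in fact a \emph{finite} sum: since $\lfloor n/p^i\rfloor = 0$ as soon as $p^i > n$, only the pairs $(p,i)$ with $i \ge 2$ and $p^i \le n$ contribute. I would then write each floor as $\lfloor n/p^i\rfloor = \frac{n}{p^i} - \left\{\frac{n}{p^i}\right\}$, so that the sum to estimate splits as
$$
n \sum_{\substack{p,\, i \ge 2 \\ p^i \le n}} \frac{\log p}{p^i} \;-\; \sum_{\substack{p,\, i \ge 2 \\ p^i \le n}} \left\{\frac{n}{p^i}\right\} \log p ,
$$
where I expect the first piece to yield the main term $\c\, n$ and the second to be $O(\sqrt n)$.

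For the main term I would complete the inner geometric series: $n\sum_p\sum_{i\ge 2}\frac{\log p}{p^i} = n\sum_p\frac{\log p}{p(p-1)} = \c\, n$ (the series defining $\c$ being convergent). It then remains to bound the discarded part
$$
E := n \sum_{\substack{p,\, i \ge 2 \\ p^i > n}} \frac{\log p}{p^i} ,
$$
which I would split according to whether $p > \sqrt n$ or $p \le \sqrt n$. When $p > \sqrt n$ all exponents $i \ge 2$ are discarded, and $n\sum_{p > \sqrt n}\frac{\log p}{p(p-1)} = n\cdot O(1/\sqrt n) = O(\sqrt n)$ by Lemma~\ref{l2}. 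When $p \le \sqrt n$, letting $i_0 = i_0(p)$ be the least exponent with $p^{i_0} > n$, the discarded tail is geometric and bounded by its leading term up to a factor $\tfrac{1}{1-1/p}\le 2$, namely $\sum_{i\ge i_0}\frac{\log p}{p^i} \le \frac{2\log p}{p^{i_0}} \le \frac{2\log p}{n}$; summing, these primes contribute at most $n\sum_{p\le\sqrt n}\frac{2\log p}{n} = 2\,\theta(\sqrt n) = O(\sqrt n)$ by Chebyshev's estimate. Hence $E = O(\sqrt n)$.

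For the fractional-part error I would use the trivial bound $\left\{n/p^i\right\} < 1$, which gives
$$
\Bigl|\sum_{\substack{p,\, i \ge 2 \\ p^i \le n}} \left\{\tfrac{n}{p^i}\right\}\log p\Bigr| \;\le\; \sum_{i \ge 2}\ \sum_{p \le n^{1/i}}\log p \;=\; \sum_{i \ge 2}\theta\bigl(n^{1/i}\bigr) .
$$
Here the term $i = 2$ is $\theta(\sqrt n) = O(\sqrt n)$, while for $i \ge 3$ each term is at most $\theta\bigl(n^{1/3}\bigr) = O(n^{1/3})$ and there are at most $\log_2 n$ nonzero terms (those with $n^{1/i}\ge 2$), so $\sum_{i\ge 3}\theta\bigl(n^{1/i}\bigr) = O\bigl(n^{1/3}\log n\bigr) = O(\sqrt n)$. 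Collecting the three estimates gives the asserted identity.

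The only delicate point — and the main obstacle — is the bookkeeping in the bound for $E$ over the small primes: one must remember that for $p\le\sqrt n$ infinitely many powers $p^i$ exceed $n$, and the right device is to dominate the resulting geometric tail by (twice) its leading term, which is already $<1/n$ by the very definition of $i_0$. Apart from that, the argument is a routine use of the Chebyshev bound $\theta(x) = O(x)$ (applied uniformly in its argument) together with Lemma~\ref{l2}.
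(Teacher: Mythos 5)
Your proof is correct and follows essentially the same route as the paper's: both isolate the main term $\c\,n$ by completing the series $\sum_{p}\frac{\log p}{p(p-1)}$, control the discarded tail using Lemma~\ref{l2} for $p>\sqrt n$ and the bound $p^{i_0}>n$ for $p\le\sqrt n$, and bound the floor-versus-exact discrepancy by a quantity of size $O(\sqrt n)$ via Chebyshev's estimates. The only (cosmetic) difference is that you use the exact identity $\lfloor x\rfloor=x-\{x\}$ and sum the fractional-part error as $\sum_{i\ge 2}\theta\bigl(n^{1/i}\bigr)$, whereas the paper argues with a two-sided inequality and bounds the corresponding error by $(\log n)\,\pi(\sqrt n)$; both yield $O(\sqrt n)$.
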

\begin{proof}
Let $n$ be a fixed positive integer. For any prime number $p$, let $e_p$ denote the greatest natural number satisfying $p^{e_p} \leq n$; explicitly $e_p = \lfloor\frac{\log{n}}{\log{p}}\rfloor$. So we have $p^{e_p + 1} > n$. On the one hand, we have
\begin{align*}
\sum_{p \text{ prime}} \left(\left\lfloor\frac{n}{p^2}\right\rfloor + \left\lfloor\frac{n}{p^3}\right\rfloor + \dots\right) \log{p} & \leq \sum_{p \text{ prime}} \left(\frac{n}{p^2} + \frac{n}{p^3} + \dots\right) \log{p} \\
& = \sum_{p \text{ prime}} \frac{n}{p (p - 1)} \log{p} ; 
\end{align*}
that is
\begin{equation}\label{eq10}
\sum_{p \text{ prime}} \left(\left\lfloor\frac{n}{p^2}\right\rfloor + \left\lfloor\frac{n}{p^3}\right\rfloor + \dots\right) \log{p} \leq \c \cdot n .
\end{equation}
On the other hand, we have (according to the definition of the $e_p$'s):
\begin{align*}
\sum_{p \text{ prime}} \left(\left\lfloor\frac{n}{p^2}\right\rfloor + \left\lfloor\frac{n}{p^3}\right\rfloor + \dots\right) \log{p} & = \sum_{\begin{subarray}{c}
p \text{ prime} \\
p \leq \sqrt{n}
\end{subarray}} \left(\left\lfloor\frac{n}{p^2}\right\rfloor + \left\lfloor\frac{n}{p^3}\right\rfloor + \dots + \left\lfloor\frac{n}{p^{e_p}}\right\rfloor\right) \log{p} \\
& \hspace*{-5cm} \geq \sum_{\begin{subarray}{c}
p \text{ prime} \\
p \leq \sqrt{n}
\end{subarray}} \left[\left(\frac{n}{p^2} - 1\right) + \left(\frac{n}{p^3} - 1\right) + \dots + \left(\frac{n}{p^{e_p}} - 1\right)\right] \log{p} \\
& \hspace*{-5cm} = n \sum_{\begin{subarray}{c}
p \text{ prime} \\
p \leq \sqrt{n}
\end{subarray}}
\left(\frac{1}{p^2} + \frac{1}{p^3} + \dots + \frac{1}{p^{e_p}}\right) \log{p} - \sum_{\begin{subarray}{c}
p \text{ prime} \\
p \leq \sqrt{n}
\end{subarray}} \left(e_p - 1\right) \log{p} \\
& \hspace*{-5cm} = n \sum_{\begin{subarray}{c}
p \text{ prime} \\
p \leq \sqrt{n}
\end{subarray}} \left(\frac{1}{p (p - 1)} - \frac{1}{p^{e_p} (p - 1)}\right) \log{p} - \sum_{\begin{subarray}{c}
p \text{ prime} \\
p \leq \sqrt{n}
\end{subarray}} \left(e_p - 1\right) \log{p} \\
& \hspace*{-5cm} = n \sum_{\begin{subarray}{c}
p \text{ prime} \\
p \leq \sqrt{n}
\end{subarray}} \frac{\log{p}}{p (p - 1)} - n \sum_{\begin{subarray}{c}
p \text{ prime} \\
p \leq \sqrt{n}
\end{subarray}} \frac{\log{p}}{p^{e_p} (p - 1)} - \sum_{\begin{subarray}{c}
p \text{ prime} \\
p \leq \sqrt{n}
\end{subarray}} \left(e_p - 1\right) \log{p} \\
& \hspace*{-5cm} = n \left(\c - \sum_{\begin{subarray}{c}
p \text{ prime} \\
p > \sqrt{n}
\end{subarray}} \frac{\log{p}}{p (p - 1)}\right) - n \sum_{\begin{subarray}{c}
p \text{ prime} \\
p \leq \sqrt{n}
\end{subarray}} \frac{\log{p}}{p^{e_p} (p - 1)} - \sum_{\begin{subarray}{c}
p \text{ prime} \\
p \leq \sqrt{n}
\end{subarray}} \left(e_p - 1\right) \log{p} ;  
\end{align*}
that is
\begin{multline}\label{eq28}
\sum_{p \text{ prime}} \left(\left\lfloor\frac{n}{p^2}\right\rfloor + \left\lfloor\frac{n}{p^3}\right\rfloor + \dots\right) \log{p} \geq \c \, n - n \sum_{\begin{subarray}{c}
p \text{ prime} \\
p > \sqrt{n}
\end{subarray}} \frac{\log{p}}{p (p - 1)} - n \sum_{\begin{subarray}{c}
p \text{ prime} \\
p \leq \sqrt{n}
\end{subarray}} \frac{\log{p}}{p^{e_p} (p - 1)} \\
- \sum_{\begin{subarray}{c}
p \text{ prime} \\
p \leq \sqrt{n}
\end{subarray}} \left(e_p - 1\right) \log{p} .
\end{multline}
But, by using Lemma \ref{l2}, we have
\begin{equation}\label{eq29}
\sum_{\begin{subarray}{c}
p \text{ prime} \\
p > \sqrt{n}
\end{subarray}} \frac{\log{p}}{p (p - 1)} = O\left(\frac{1}{\sqrt{n}}\right) .
\end{equation}
Next, by using the fact $p^{e_p} > \frac{n}{p}$ (for any prime $p$), we have
\begin{equation}\label{eq30}
\sum_{\begin{subarray}{c}
p \text{ prime} \\
p \leq \sqrt{n}
\end{subarray}} \frac{\log{p}}{p^{e_p} (p - 1)} < \frac{1}{n} \sum_{\begin{subarray}{c}
p \text{ prime} \\
p \leq \sqrt{n}
\end{subarray}} \frac{p}{p - 1} \log{p} \leq \frac{2}{n} \sum_{\begin{subarray}{c}
p \text{ prime} \\
p \leq \sqrt{n}
\end{subarray}} \log{p} = \frac{2}{n} \theta\left(\sqrt{n}\right) = O\left(\frac{1}{\sqrt{n}}\right) ,
\end{equation}
and by using the fact $e_p - 1 < e_p := \lfloor\frac{\log{n}}{\log{p}}\rfloor \leq \frac{\log{n}}{\log{p}}$, we have
\begin{equation}\label{eq31}
\sum_{\begin{subarray}{c}
p \text{ prime} \\
p \leq \sqrt{n}
\end{subarray}} \left(e_p - 1\right) \log{p} < \sum_{\begin{subarray}{c}
p \text{ prime} \\
p \leq \sqrt{n}
\end{subarray}} \log{n} = (\log{n}) \pi(\sqrt{n}) = O\left(\sqrt{n}\right) .
\end{equation}
Then, by inserting \eqref{eq29}, \eqref{eq30} and \eqref{eq31} into \eqref{eq28}, we get
\begin{equation}\label{eq13}
\sum_{p \text{ prime}} \left(\left\lfloor\frac{n}{p^2}\right\rfloor + \left\lfloor\frac{n}{p^3}\right\rfloor + \dots\right) \log{p} \geq \c \, n + O\left(\sqrt{n}\right) .
\end{equation}
Finally, \eqref{eq10} and \eqref{eq13} conclude to
$$
\sum_{p \text{ prime}} \left(\left\lfloor\frac{n}{p^2}\right\rfloor + \left\lfloor\frac{n}{p^3}\right\rfloor + \dots\right) \log{p} = \c \cdot n + O\left(\sqrt{n}\right) ,
$$
as required.
\end{proof}

We are now able to prove Theorem \ref{t3}.

\begin{proof}[Proof of Theorem \ref{t3}]
For any sufficiently large integer $n$, we have according to Legendre's formula:
\begin{align*}
\log{\rho_n} = \sum_{p \text{ prime}} \left\lfloor\frac{n}{p}\right\rfloor \log{p} & = \sum_{p \text{ prime}}\left(\left\lfloor\frac{n}{p}\right\rfloor + \left\lfloor\frac{n}{p^2}\right\rfloor + \dots\right) \log{p} \\
& \hspace*{3cm} - \sum_{p \text{ prime}}\left(\left\lfloor\frac{n}{p^2}\right\rfloor + \left\lfloor\frac{n}{p^3}\right\rfloor + \dots\right) \log{p} \\[2mm]
& = \log(n!) - \sum_{p \text{ prime}}\left(\left\lfloor\frac{n}{p^2}\right\rfloor + \left\lfloor\frac{n}{p^3}\right\rfloor + \dots\right) \log{p} .
\end{align*}
Then, the weaker form of Stirling's approximation formula $\log(n!) = n \log{n} - n + O(\log{n})$ and Proposition \ref{p10} conclude to:
$$
\log{\rho_n} = n \log{n} - (\c + 1) n + O(\sqrt{n}) ,
$$
as required.
\end{proof}

We now turn to estimate $\log{\sigma_n}$ by leaning on the estimate of $\log{\rho_n}$ (given by Theorem \ref{t3} proved above). To do so, we shall first establish an important formula relating $\log{\rho_n}$ and $\log{\sigma_n}$. This is done with the following proposition:

\begin{prop}\label{p4}
For any positive integer $n$, we have
\begin{align}
\log{\rho_n} & = \sum_{k = 1}^{n} \theta\left(\frac{n}{k}\right) , \label{eq14} \\
\log{\sigma_n} & = \sum_{k = 1}^{n} \theta\left(\frac{n}{k} + 1\right) , \label{eq15} \\
\log{\sigma_n} - \log{\rho_n} & = \sum_{\begin{subarray}{c}
1 \leq k \leq n \\
\lfloor\frac{n}{k} + 1\rfloor \text{ is prime}
\end{subarray}} \!\!\!\!\!\!\log\left\lfloor\frac{n}{k} + 1\right\rfloor . \label{eq16}
\end{align}
\end{prop}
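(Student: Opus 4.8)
The plan is to prove the three identities \eqref{eq14}, \eqref{eq15} and \eqref{eq16} in this order, obtaining the last one simply by subtracting \eqref{eq14} from \eqref{eq15}. For the first two, the common device is to expand each value $\theta(\cdot)$ as a sum of $\log p$ over primes $p$ and then interchange the order of summation (a Dirichlet-type rearrangement), which converts the outer sum over $k$ into a count of the integers $k \geq 1$ lying below a hyperbola.

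For \eqref{eq14}, I would write $\theta(n/k) = \sum_{p \leq n/k} \log p$ and swap the two sums:
$$
\sum_{k=1}^{n} \theta\!\left(\frac{n}{k}\right) = \sum_{p \text{ prime}} (\log p) \cdot \card{\left\{k \in \N^* ~;~ k \leq n \text{ and } p \leq \tfrac{n}{k}\right\}} .
$$
The condition $p \leq n/k$ rewrites as $k \leq n/p$, and since $p \geq 2$ forces $n/p \leq n/2 \leq n$, the constraint $k \leq n$ is automatic; hence the cardinality equals $\lfloor n/p\rfloor$ (this also correctly returns $0$ when $p > n$). The right-hand side is thus $\sum_{p} \lfloor n/p\rfloor \log p$, which is exactly $\log\rho_n$ by the definition of $\rho_n$, proving \eqref{eq14}.

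For \eqref{eq15}, I would run the same argument with $\theta(n/k+1) = \sum_{p \leq n/k + 1}\log p$:
$$
\sum_{k=1}^{n} \theta\!\left(\frac{n}{k} + 1\right) = \sum_{p \text{ prime}} (\log p) \cdot \card{\left\{k \in \N^* ~;~ k \leq n \text{ and } p \leq \tfrac{n}{k} + 1\right\}} .
$$
Now $p \leq n/k + 1$ is equivalent to $k \leq n/(p-1)$ (legitimate since $p - 1 \geq 1$), and this is the one place where a little care is needed: for $p = 2$ one gets $k \leq n/(p-1) = n$, so the cutoff $k \leq n$ is precisely what makes the count equal $n = \lfloor n/(p-1)\rfloor$; for $p \geq 3$ one has $n/(p-1) \leq n/2 \leq n$, so $k \leq n$ is again automatic and the count is $\lfloor n/(p-1)\rfloor$ (returning $0$ once $p > n+1$). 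Summing yields $\sum_{p}\lfloor n/(p-1)\rfloor\log p = \log\sigma_n$, which is \eqref{eq15}.

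Finally, \eqref{eq16} comes by subtraction: since \eqref{eq14} and \eqref{eq15} have the same range $1 \leq k \leq n$,
$$
\log\sigma_n - \log\rho_n = \sum_{k=1}^{n}\left[\theta\!\left(\tfrac{n}{k}+1\right) - \theta\!\left(\tfrac{n}{k}\right)\right] = \sum_{k=1}^{n}\ \sum_{\substack{p \text{ prime} \\ n/k < p \leq n/k + 1}} \log p .
$$
The half-open interval $\bigl(\tfrac{n}{k},\tfrac{n}{k}+1\bigr]$ has length $1$, hence contains exactly one integer; separating the cases according to whether $n/k$ is itself an integer shows that this integer is always $\lfloor \tfrac{n}{k}+1\rfloor$. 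Therefore the inner sum equals $\log\lfloor \tfrac{n}{k}+1\rfloor$ when $\lfloor \tfrac{n}{k}+1\rfloor$ is prime and $0$ otherwise, which is exactly the right-hand side of \eqref{eq16}. The whole argument is elementary; I would flag as its only (mild) subtleties the activation of the cutoff $k \leq n$ at the prime $p = 2$ in \eqref{eq15}, and the identification of $\lfloor \tfrac{n}{k}+1\rfloor$ as the unique integer of $(\tfrac{n}{k},\tfrac{n}{k}+1]$ in \eqref{eq16}.
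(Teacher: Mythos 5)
Your proposal is correct and follows essentially the same route as the paper: both proofs rest on the same interchange of the double sum $\sum_{p}\sum_{k}$ (you merely start from the $\theta$-side and the paper from the $\log\rho_n$-side, i.e.\ from $\lfloor n/p\rfloor=\sum_{1\leq k\leq n/p}1$), and both identify $\lfloor\frac{n}{k}+1\rfloor$ as the unique integer of $(\frac{n}{k},\frac{n}{k}+1]$ to get \eqref{eq16} by subtraction. Your explicit check that the cutoff $k\leq n$ is consistent with $k\leq n/(p-1)$ at $p=2$ is a small point the paper leaves implicit, but it does not change the argument.
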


\begin{proof}
Let $n$ be a fixed positive integer. We have
\begin{multline*}
\log{\rho_n} = \sum_{p \text{ prime}} \left\lfloor\frac{n}{p}\right\rfloor \log{p} = \sum_{p \text{ prime}} \left(\sum_{1 \leq k \leq \frac{n}{p}} 1\right) \log{p} = \sum_{1 \leq k \leq n} \left(\sum_{\begin{subarray}{c}
p \text{ prime} \\
p \leq \frac{n}{k}
\end{subarray}} \log{p}\right) \\
= \sum_{1 \leq k \leq n} \theta\left(\frac{n}{k}\right) ,
\end{multline*}
proving \eqref{eq14}. Similarly, we have
\begin{multline*}
\log{\sigma_n} = \sum_{p \text{ prime}} \left\lfloor\frac{n}{p - 1}\right\rfloor \log{p} = \sum_{p \text{ prime}} \left(\sum_{1 \leq k \leq \frac{n}{p - 1}} 1\right) \log{p} = \sum_{1 \leq k \leq n} \left(\sum_{\begin{subarray}{c}
p \text{ prime} \\
p \leq \frac{n}{k} + 1
\end{subarray}} \log{p}\right) \\[1mm]
= \sum_{1 \leq k \leq n} \theta\left(\frac{n}{k} + 1\right) ,
\end{multline*}
proving \eqref{eq15}. Finally, using \eqref{eq14} and \eqref{eq15}, let us prove \eqref{eq16}. We have
\begin{align*}
\log{\sigma_n} - \log{\rho_n} & = \sum_{1 \leq k \leq n} \theta\left(\frac{n}{k} + 1\right) - \sum_{1 \leq k \leq n} \theta\left(\frac{n}{k}\right) \\
& = \sum_{1 \leq k \leq n} \left(\theta\left(\frac{n}{k} + 1\right) - \theta\left(\frac{n}{k}\right)\right) \\
& = \sum_{1 \leq k \leq n} \left(\sum_{\begin{subarray}{c}
p \text{ prime} \\
\frac{n}{k} < p \leq \frac{n}{k} + 1
\end{subarray}} \log{p}\right) .
\end{align*}
But since for any $1 \leq k \leq n$, the interval $(\frac{n}{k} , \frac{n}{k} + 1]$ contains a unique integer which is $\lfloor\frac{n}{k} + 1\rfloor$, it follows that:
$$
\log{\sigma_n} - \log{\rho_n} = \sum_{\begin{subarray}{c}
1 \leq k \leq n \\
\lfloor\frac{n}{k} + 1\rfloor \text{ is prime}
\end{subarray}} \!\!\!\!\!\!\log\left\lfloor\frac{n}{k} + 1\right\rfloor ,
$$
proving \eqref{eq16}. The proof is complete.
\end{proof}

To deduce an asymptotic estimate for $\log{\sigma_n}$ from that of $\log{\rho_n}$ by means of Proposition \ref{p4}, we must estimate (asymptotically) the sum
$$
\mathcal{S}(n) := \sum_{\begin{subarray}{c}
1 \leq k \leq n \\
\lfloor\frac{n}{k} + 1\rfloor \text{ is prime}
\end{subarray}} \!\!\!\!\!\!\log\left\lfloor\frac{n}{k} + 1\right\rfloor .
$$
To do so, we split $\mathcal{S}$ into two sums:
$$
\mathcal{S}_1(n) := \sum_{\begin{subarray}{c}
1 \leq k \leq \sqrt{n} \\
\lfloor\frac{n}{k} + 1\rfloor \text{ is prime}
\end{subarray}} \!\!\!\!\!\!\log\left\lfloor\frac{n}{k} + 1\right\rfloor ~~~~\text{and}~~~~ \mathcal{S}_2(n) := \sum_{\begin{subarray}{c}
\sqrt{n} < k \leq n \\
\lfloor\frac{n}{k} + 1\rfloor \text{ is prime}
\end{subarray}} \!\!\!\!\!\!\log\left\lfloor\frac{n}{k} + 1\right\rfloor .
$$
The estimate of $\mathcal{S}_2(n)$ is within our reach; it is given by the following proposition:
\begin{prop}\label{p5}
For any positive integer $n$, we have
$$
\mathcal{S}_2(n) = \c \, n + O\left(\sqrt{n}\right) .
$$
\end{prop}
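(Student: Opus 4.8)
The plan is to evaluate $\mathcal{S}_2(n)$ by sorting its terms according to the prime value taken by $\left\lfloor\frac{n}{k}+1\right\rfloor$. Writing, for a prime $p$,
$$
N_p := \card{\left\{k\in\Z :\ \sqrt{n}<k\le n,\ \left\lfloor\frac{n}{k}+1\right\rfloor=p\right\}} ,
$$
one has $\mathcal{S}_2(n)=\sum_{p\text{ prime}}(\log p)\,N_p$. The first step is to translate the inner condition: for a positive integer $k$, the equality $\left\lfloor\frac{n}{k}+1\right\rfloor=p$ is equivalent to $p-1\le\frac{n}{k}<p$, i.e.\ to $\frac{n}{p}<k\le\frac{n}{p-1}$. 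Since every such $k$ automatically satisfies $k\le\frac{n}{p-1}\le n$, the constraint $k\le n$ in the definition of $N_p$ is harmless, and only the interplay between the two lower bounds $k>\sqrt n$ and $k>\frac{n}{p}$ needs attention.

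The combinatorial heart of the argument is the following dichotomy. If $p\le\sqrt n$, then $\frac{n}{p}\ge\sqrt n$, so any integer $k>\frac{n}{p}$ already satisfies $k>\sqrt n$; hence the constraint $k>\sqrt n$ is redundant and $N_p=\left\lfloor\frac{n}{p-1}\right\rfloor-\left\lfloor\frac{n}{p}\right\rfloor$ exactly. If $p>\sqrt n$, then $\frac{n}{p}<\sqrt n$, so $k>\sqrt n$ becomes the binding lower bound and $N_p$ is the number of integers in $\left(\sqrt n,\frac{n}{p-1}\right]$. Putting $m=\lfloor\sqrt n\rfloor$ and using $n<(m+1)^2$, one checks that $N_p\ne 0$ forces $p-1\le\frac{n}{m+1}<m+1$, whence $p\le m+1$; so at most one prime, $p=m+1$, survives in this range, and for it $N_p\le\left\lfloor\frac{n}{m}\right\rfloor-m=O(1)$ (again because $n<(m+1)^2$). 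Consequently the whole contribution of the primes $p>\sqrt n$ is $O(\log n)$, and
$$
\mathcal{S}_2(n)=\sum_{\substack{p\text{ prime}\\ p\le\sqrt n}}(\log p)\left(\left\lfloor\frac{n}{p-1}\right\rfloor-\left\lfloor\frac{n}{p}\right\rfloor\right)+O(\log n) .
$$

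It then remains to run a routine estimation. Replacing the floor difference $\lfloor\frac{n}{p-1}\rfloor-\lfloor\frac{n}{p}\rfloor$ by $\frac{n}{p(p-1)}$ introduces an error of $O(1)$, hence after multiplication by $\log p$ and summation an error of $O\!\left(\sum_{p\le\sqrt n}\log p\right)=O(\theta(\sqrt n))=O(\sqrt n)$ by Chebyshev's estimate; this gives $\mathcal{S}_2(n)=n\sum_{p\le\sqrt n}\frac{\log p}{p(p-1)}+O(\sqrt n)$. Finally I would complete the sum to all primes, writing $n\sum_{p\le\sqrt n}\frac{\log p}{p(p-1)}=\c\,n-n\sum_{p>\sqrt n}\frac{\log p}{p(p-1)}$, where the tail is $O(1/\sqrt n)$ by Lemma \ref{l2} applied with $x=\sqrt n$, so it contributes $O(\sqrt n)$; altogether $\mathcal{S}_2(n)=\c\,n+O(\sqrt n)$, as claimed. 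I expect the only genuinely delicate point to be the bookkeeping around $k\approx\sqrt n$ in the second paragraph — namely verifying that for $p\le\sqrt n$ the bound $k>\sqrt n$ is automatic (so that $N_p$ has the clean closed form) and that at most one ``boundary'' prime $p=\lfloor\sqrt n\rfloor+1$ can occur with $p>\sqrt n$, contributing only $O(\log n)$; everything after that is a standard combination of Chebyshev's estimate with Lemma \ref{l2}.
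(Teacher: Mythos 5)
Your proof is correct and follows essentially the same route as the paper's: group the terms of $\mathcal{S}_2(n)$ by the prime value $p=\lfloor\frac{n}{k}+1\rfloor$, translate the condition into $\frac{n}{p}<k\le\frac{n}{p-1}$, approximate the count by $\frac{n}{p(p-1)}+O(1)$, and finish with Chebyshev's estimate and Lemma \ref{l2}. The only (harmless) difference is bookkeeping at the boundary: the paper absorbs it by noting $\max\bigl(\sqrt{n},\frac{n}{p}\bigr)$ lies within $1$ of $\frac{n}{p}$ for all $p<\sqrt{n}+1$, whereas you isolate the single possible prime $p=\lfloor\sqrt{n}\rfloor+1$ exceeding $\sqrt{n}$ and bound its contribution by $O(\log n)$.
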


\begin{proof}
For a given $n \in \N^*$, we have
$$
\mathcal{S}_2(n) := \sum_{\begin{subarray}{c}
\sqrt{n} < k \leq n \\
\lfloor\frac{n}{k} + 1\rfloor \text{ is prime}
\end{subarray}} \!\!\!\!\!\!\log\left\lfloor\frac{n}{k} + 1\right\rfloor = \sum_{\begin{subarray}{c}
p \text{ prime} \\
p < \sqrt{n} + 1
\end{subarray}} \sum_{\begin{subarray}{c}
\sqrt{n} < k \leq n \\
\left\lfloor\frac{n}{k} + 1\right\rfloor = p
\end{subarray}} \!\!\!\! \log{p} .
$$
But since for any prime number $p$ satisfying $p < \sqrt{n} + 1$ and any integer $k$ satisfying $\sqrt{n} < k \leq n$, we have
$$
\left\lfloor\frac{n}{k} + 1\right\rfloor = p \Longleftrightarrow p \leq \frac{n}{k} + 1 < p + 1 \Longleftrightarrow \frac{n}{p} < k \leq \frac{n}{p - 1} ,
$$
it follows that:
\begin{equation}\label{eq17}
\mathcal{S}_2(n) = \sum_{\begin{subarray}{c}
p \text{ prime} \\
p < \sqrt{n} + 1
\end{subarray}} \sum_{\begin{subarray}{c}
\sqrt{n} < k \leq n \\
\frac{n}{p} < k \leq \frac{n}{p - 1}
\end{subarray}} \!\!\!\! \log{p} = \sum_{\begin{subarray}{c}
p \text{ prime} \\
p < \sqrt{n} + 1
\end{subarray}} \!\!\left(\sum_{\max\left(\sqrt{n} , \frac{n}{p}\right) < k \leq \frac{n}{p - 1}} 1\right) \log{p} .
\end{equation}
Next, we remark that for any prime number $p$ satisfying $p < \sqrt{n} + 1$, we have $\frac{n}{p} > \frac{n}{\sqrt{n} + 1} > \sqrt{n} - 1$, implying that $\frac{n}{p} \leq \max\left(\sqrt{n} , \frac{n}{p}\right) < \frac{n}{p} + 1$. Consequently, the interval $\left(\max\left(\sqrt{n} , \frac{n}{p}\right) , \frac{n}{p - 1}\right]$ contains at least \\
$\frac{n}{p - 1} - \max\left(\sqrt{n} , \frac{n}{p}\right) - 1 > \frac{n}{p - 1} - \frac{n}{p} - 2 = \frac{n}{p (p - 1)} - 2$ integers and at most \\
$\frac{n}{p - 1} - \max\left(\sqrt{n} , \frac{n}{p}\right) + 1 \leq \frac{n}{p - 1} - \frac{n}{p} + 1 = \frac{n}{p(p - 1)} + 1$ integers. So, for any prime number $p$ satisfying $p < \sqrt{n} + 1$, we have
$$
\sum_{\max\left(\sqrt{n} , \frac{n}{p}\right) < k \leq \frac{n}{p - 1}} 1 = \frac{n}{p (p - 1)} + O_{\perp p}(1) . 
$$
By reporting this into \eqref{eq17}, we get
\begin{align*}
\mathcal{S}_2(n) & = \sum_{\begin{subarray}{c}
p \text{ prime} \\
p < \sqrt{n} + 1
\end{subarray}} \left(\frac{n}{p (p - 1)} + O_{\perp p}(1)\right) \log{p} \\
& = \left(\sum_{\begin{subarray}{c}
p \text{ prime} \\
p < \sqrt{n} + 1
\end{subarray}} \frac{\log{p}}{p (p - 1)}\right) n + O\left(\theta(\sqrt{n} + 1)\right) \\
& = \left(\c - \sum_{\begin{subarray}{c}
p \text{ prime} \\
p \geq \sqrt{n} + 1
\end{subarray}} \frac{\log{p}}{p (p - 1)}\right) n + O\left(\theta(\sqrt{n} + 1)\right) .
\end{align*}
But since
$$
\sum_{\begin{subarray}{c}
p \text{ prime} \\
p \geq \sqrt{n} + 1
\end{subarray}} \frac{\log{p}}{p (p - 1)} \leq 2 \!\!\!\sum_{\begin{subarray}{c}
p \text{ prime} \\
p \geq \sqrt{n} + 1
\end{subarray}} \frac{\log{p}}{p^2} = O\left(\frac{1}{\sqrt{n}}\right) ~~~~~~~ (\text{according to Lemma \ref{l2}})
$$
and
$$
\theta\left(\sqrt{n} + 1\right) = O\left(\sqrt{n}\right) ~~~~~~~~~~ (\text{according to the Chebyshev estimates}) ,
$$
we conclude to
$$
\mathcal{S}_2(n) = \c \, n + O\left(\sqrt{n}\right) ,
$$
as required.
\end{proof}

We now turn to estimate the crucial sum $\mathcal{S}_1(n)$. To facilitate the task, we begin by estimating $\mathcal{S}_1(n)$ in terms of the cardinality of a specific set of prime numbers. For any $n \in \N^*$, we set
$$
\mathscr{A}_n := \left\{\left\lfloor\frac{n}{k} + 1\right\rfloor ~;~ k \in \N^* , k \leq \sqrt{n}\right\} \cap \mathscr{P}
$$
(in other words, $\mathscr{A}_n$ is the set of prime numbers having the form $\left\lfloor\frac{n}{k} + 1\right\rfloor$, where $k \leq \sqrt{n}$ is a positive integer). Above all, it is important to note that for any $n \in \N^*$, the positive integers $\left\lfloor\frac{n}{k} + 1\right\rfloor$ ($k \in \N^*$, $k \leq \sqrt{n}$) (appearing in the definition of $\mathscr{A}_n$) are pairwise distinct. Indeed, for all $k , \ell \in \N^*$, with $k \leq \sqrt{n}$, $\ell \leq \sqrt{n}$, and $k \neq \ell$, we have
$$
\left\vert\left(\frac{n}{k} + 1\right) - \left(\frac{n}{\ell} + 1\right)\right\vert = \frac{n \vert \ell - k\vert}{k \ell} \geq \frac{n}{k \ell} \geq 1 ,
$$
implying that:
$$
\left\lfloor\frac{n}{k} + 1\right\rfloor \neq \left\lfloor\frac{n}{\ell} + 1\right\rfloor .
$$
It follows from this fact that $\mathscr{A}_n$ ($n \in \N^*$) has the same cardinality with the set of positive integers $k$ satisfying $k \leq \sqrt{n}$ and for which $\left\lfloor\frac{n}{k} + 1\right\rfloor$ is prime. That is
\begin{equation}\label{eq18}
\card \mathscr{A}_n = \sum_{\begin{subarray}{c}
1 \leq k \leq \sqrt{n} \\
\left\lfloor\frac{n}{k} + 1\right\rfloor \text{ is prime}
\end{subarray}} 1 .
\end{equation}
The estimate of $\mathcal{S}_1(n)$ in terms of $\card \mathscr{A}_n$ ($n \in \N^*$) is given by the following proposition:
\begin{prop}\label{p6}
We have
$$
\mathcal{S}_1(n) = \left(\card \mathscr{A}_n\right) \cdot O\left(\log{n}\right) .
$$
\end{prop}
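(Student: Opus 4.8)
The plan is to observe that $\mathcal{S}_1(n)$ is a sum of at most $\card\mathscr{A}_n$ terms, each of size $O(\log n)$, so the whole sum is at most $\card\mathscr{A}_n$ times something of order $\log n$; since every term is nonnegative, this also bounds it from below in the required sense. Concretely, I would proceed as follows.

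First I would bound the summand. For any integer $k$ with $1 \leq k \leq \sqrt{n}$, we have $\frac{n}{k} + 1 \leq n + 1$, hence $\left\lfloor\frac{n}{k} + 1\right\rfloor \leq n + 1$, so each term satisfies $0 \leq \log\left\lfloor\frac{n}{k} + 1\right\rfloor \leq \log(n + 1)$. (The lower bound is clear since $\left\lfloor\frac{n}{k}+1\right\rfloor \geq 2$ whenever it is prime.)

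Second, I would count the terms. By the discussion preceding \eqref{eq18}, the positive integers $\left\lfloor\frac{n}{k}+1\right\rfloor$ for $1 \leq k \leq \sqrt{n}$ are pairwise distinct, so the number of indices $k \leq \sqrt{n}$ for which $\left\lfloor\frac{n}{k}+1\right\rfloor$ is prime equals $\card\mathscr{A}_n$; this is precisely \eqref{eq18}. Combining with the bound on the summand gives
$$
0 ~\leq~ \mathcal{S}_1(n) ~=~ \sum_{\begin{subarray}{c} 1 \leq k \leq \sqrt{n} \\ \lfloor\frac{n}{k}+1\rfloor \text{ is prime} \end{subarray}} \!\!\!\! \log\left\lfloor\frac{n}{k}+1\right\rfloor ~\leq~ \left(\card\mathscr{A}_n\right) \log(n+1) .
$$
Since $\log(n+1) = O(\log n)$, it follows that $\mathcal{S}_1(n) = \left(\card\mathscr{A}_n\right) \cdot O(\log n)$, as required. (When $\card\mathscr{A}_n = 0$ the sum is empty and the identity holds trivially; when $\card\mathscr{A}_n \geq 1$ the quantity $\mathcal{S}_1(n)/\card\mathscr{A}_n$ lies in $[0, \log(n+1)]$, which is the quantity that is $O(\log n)$.)

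There is essentially no hard step here; the only thing to be careful about is the meaning of the notation $\left(\card\mathscr{A}_n\right)\cdot O(\log n)$ and the harmless degenerate case $\card\mathscr{A}_n = 0$, both of which are handled by the parenthetical remark above.
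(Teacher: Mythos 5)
Your proof is correct and takes essentially the same approach as the paper: both use the pairwise-distinctness observation \eqref{eq18} to count the summands as $\card\mathscr{A}_n$ and bound each term above by $\log(n+1)=O(\log n)$. The only (immaterial) difference is that the paper also records the lower bound $\log\left\lfloor\frac{n}{k}+1\right\rfloor>\frac{1}{2}\log n$ coming from $\left\lfloor\frac{n}{k}+1\right\rfloor>\sqrt{n}$, yielding a two-sided estimate, whereas you bound below by $0$; for the stated $O$-estimate your version suffices.
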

\begin{proof}
Let $n \in \N^*$ be fixed. From the obvious double inequality
$$
\sqrt{n} < \left\lfloor\frac{n}{k} + 1\right\rfloor \leq n + 1 ~~~~~~~~~~ (\forall k \in \N^* ,\text{ with } k \leq \sqrt{n}) ,
$$
we have that
$$
\frac{1}{2} \log(n) \cdot \!\!\!\!\!\!\sum_{\begin{subarray}{c}
1 \leq k \leq \sqrt{n} \\
\left\lfloor\frac{n}{k} + 1\right\rfloor \text{ is prime}
\end{subarray}} \!\!\!\!\!\! 1 \leq \mathcal{S}_1(n) \leq \log(n + 1) \cdot \!\!\!\!\!\! \sum_{\begin{subarray}{c}
1 \leq k \leq \sqrt{n} \\
\left\lfloor\frac{n}{k} + 1\right\rfloor \text{ is prime}
\end{subarray}} \!\!\!\!\!\! 1 ;
$$
which immediately implies (taking into account \eqref{eq18}):
$$
\mathcal{S}_1(n) = \left(\card \mathscr{A}_n\right) \cdot O\left(\log{n}\right) ,
$$
as required.
\end{proof}

By combining Formula \eqref{eq16} of Proposition \ref{p4}, Theorem \ref{t3}, Proposition \ref{p5}, and Proposition \ref{p6}, we immediately derive the following proposition:

\begin{prop}\label{p7}
We have
\begin{equation}
\log{\sigma_n} = n \log{n} - n + O\left(\sqrt{n}\right) + \left(\card \mathscr{A}_n\right) \cdot O\left(\log{n}\right) . \tag*{$\square$}
\end{equation}
\end{prop}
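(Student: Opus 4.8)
The plan is simply to assemble the ingredients that have already been established. Starting from Formula \eqref{eq16} of Proposition \ref{p4}, I would write $\log\sigma_n - \log\rho_n = \mathcal{S}(n)$, where $\mathcal{S}(n)$ denotes the sum $\sum \log\lfloor\frac{n}{k}+1\rfloor$ taken over those $k$ with $1 \le k \le n$ and $\lfloor\frac{n}{k}+1\rfloor$ prime. Splitting the range of summation at $\sqrt{n}$, the decomposition $\mathcal{S}(n) = \mathcal{S}_1(n) + \mathcal{S}_2(n)$ is immediate from the very definitions of $\mathcal{S}_1$ and $\mathcal{S}_2$ as the restrictions of $\mathcal{S}$ to the indices $k \le \sqrt{n}$ and $k > \sqrt{n}$ respectively. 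Hence $\log\sigma_n = \log\rho_n + \mathcal{S}_1(n) + \mathcal{S}_2(n)$.

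Next I would substitute the three known estimates into this identity: Theorem \ref{t3} gives $\log\rho_n = n\log n - (\c+1)n + O(\sqrt{n})$; Proposition \ref{p5} gives $\mathcal{S}_2(n) = \c\,n + O(\sqrt{n})$; and Proposition \ref{p6} gives $\mathcal{S}_1(n) = (\card\mathscr{A}_n)\cdot O(\log n)$. Adding the three contributions, the terms $-(\c+1)n$ coming from $\log\rho_n$ and $+\c\,n$ coming from $\mathcal{S}_2(n)$ combine to $-n$, the two $O(\sqrt{n})$ error terms merge into a single $O(\sqrt{n})$, and the $\mathcal{S}_1(n)$ term is carried along unchanged, leaving $\log\sigma_n = n\log n - n + O(\sqrt{n}) + (\card\mathscr{A}_n)\cdot O(\log n)$, which is exactly the claimed identity.

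There is essentially no obstacle here: the argument is purely a matter of bookkeeping, and the only point worth spelling out is the trivial splitting $\mathcal{S}(n) = \mathcal{S}_1(n) + \mathcal{S}_2(n)$. All of the substantive work has already been carried out upstream — in Theorem \ref{t3} (whose proof rests on Proposition \ref{p10} and Lemma \ref{l2}), in Proposition \ref{p5}, and in Proposition \ref{p6} — so the proof of Proposition \ref{p7} reduces to combining these estimates and simplifying the resulting expression.
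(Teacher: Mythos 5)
Your proposal is correct and follows exactly the route the paper takes: the paper derives Proposition \ref{p7} by combining Formula \eqref{eq16}, Theorem \ref{t3}, Proposition \ref{p5}, and Proposition \ref{p6}, with the splitting $\mathcal{S}(n)=\mathcal{S}_1(n)+\mathcal{S}_2(n)$ and the cancellation $-(\c+1)n+\c\,n=-n$ being the only bookkeeping involved. The paper in fact states the result as an immediate consequence without writing out these details, which you have supplied accurately.
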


At this point, the whole problem is now to estimate $\card \mathscr{A}_n$ ($n \in \N^*$). First, let us do it heuristically. For $n \in \N^*$, since the numbers constituting the set $\left\{\left\lfloor\frac{n}{k} + 1\right\rfloor ~;~ k \in \N^* , k \leq \sqrt{n}\right\}$ (of cardinality $\lfloor\sqrt{n}\rfloor$) do not satisfy (apparently) any particular congruence, we may conjecture with considerable confidence that the quantity prime numbers in it (i.e., $\card \mathscr{A}_n$) is $O\left(\frac{\sqrt{n}}{\log{\sqrt{n}}}\right) = O\left(\frac{\sqrt{n}}{\log{n}}\right)$. We precisely make the following

\begin{conj}\label{conj2}
There exist two positive absolute constants $\alpha$ and $\beta$ {\rm(}with $\alpha < \beta${\rm)} such that for any sufficiently large positive integer $n$, we have
$$
\alpha \left(\frac{\sqrt{n}}{\log{n}}\right) \leq \card \mathscr{A}_n \leq \beta \left(\frac{\sqrt{n}}{\log{n}}\right) .
$$
\end{conj}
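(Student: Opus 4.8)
\section*{Proof proposal (toward Conjecture \ref{conj2})}

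The plan is to convert $\card\mathscr{A}_n$ into a prime-counting problem for short intervals with endpoints of the form $n/k$, and then to bound that count from above and below. First I would use the distinctness fact established just above the statement: the map $k\mapsto\lfloor\frac nk+1\rfloor$ is injective on $\{1,\dots,\lfloor\sqrt n\rfloor\}$, so $\card\mathscr{A}_n=\#\{1\le k\le\sqrt n:\lfloor\frac nk+1\rfloor\in\mathscr{P}\}$ by \eqref{eq18}. Next I would pass to the dual description (exactly the equivalence used in the proofs of Theorem \ref{t2} and Proposition \ref{p5}): for a prime $p>\sqrt n+1$ one has $p\in\mathscr{A}_n$ if and only if the interval $(\frac np,\frac n{p-1}]$ contains an integer, equivalently $\lfloor\frac n{p-1}\rfloor>\lfloor\frac np\rfloor$; the at most two remaining primes in $[\sqrt n,\sqrt n+1]$ contribute $O(1)$. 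The quantity governing everything is $\frac n{p-1}-\frac np=\frac n{p(p-1)}$, which is $\ge1$ (so the interval automatically meets $\Z$) precisely for $p\lesssim\sqrt n$, and which for larger $p$ measures the heuristic ``chance'' $\asymp n/p^2$ that $(\frac np,\frac n{p-1}]$ catches an integer.

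For the upper bound I would decompose $\mathscr{A}_n$ dyadically, $P\le p<2P$ with $\sqrt n\le P\le n+1$. Within one block there are two elementary estimates: the number of $k$ with $\lfloor\frac nk+1\rfloor\in[P,2P)$ equals the number of integers in an interval of length $\ll n/P$, hence is $\ll n/P$; and the number of primes in $[P,2P)$ is $\ll P/\log P$ by Chebyshev's estimates. Taking the minimum of the two in each block and summing over the $O(\log n)$ blocks (the dominant contribution coming from $P\asymp\sqrt{n\log n}$) yields the unconditional bound $\card\mathscr{A}_n\ll\sqrt n/\sqrt{\log n}$. To sharpen this to the conjectured $O(\sqrt n/\log n)$ one must gain an extra $\sqrt{\log n}$ in the blocks with $P\asymp\sqrt n$: one must show that the $\asymp n/P$ integers that actually occur as $\lfloor\frac nk+1\rfloor$ in such a block are not concentrated on the primes, so that they contain only $\ll\frac{n/P}{P}\cdot\frac P{\log P}=\frac n{P\log P}$ of them — the count one would expect if these integers behaved typically. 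Equivalently: $\lfloor\frac n{p-1}\rfloor>\lfloor\frac np\rfloor$ holds for at most a $\ll1/\log P$ proportion of the primes $p\in[P,2P)$, uniformly in $n$.

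For the lower bound I would restrict to primes $p$ in a window $\sqrt n<p\le2\sqrt n$, where $\frac n{p(p-1)}$ lies in a fixed range $(\delta,1)$ with $\delta>0$. There $\lfloor\frac n{p-1}\rfloor-\lfloor\frac np\rfloor\in\{0,1\}$, and $p\in\mathscr{A}_n$ amounts to the value $1$ being attained, which happens exactly when $n/p$ lies within $\frac n{p(p-1)}$ below an integer, i.e.\ when the residue $n\bmod p$ falls in an interval that is a proportion $\asymp n/p^2\asymp1$ of $[0,p)$; this should hold for a positive proportion of such $p$. Since there are $\sim\sqrt n/\log n$ primes in the window by the prime number theorem, proving that a positive proportion of them satisfy it would give $\card\mathscr{A}_n\gg\sqrt n/\log n$, while the upper bound above and the trivial inequality $\card\mathscr{A}_n\le\lfloor\sqrt n\rfloor$ settle the other side.

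The main obstacle — and the reason the statement is offered only as a conjecture — is that both halves reduce to the same unresolved point: the distribution modulo $1$ of the numbers $n/p$, equivalently the distribution of the residues $n\bmod p$, as $p$ runs over the primes in a dyadic window near $\sqrt n$ (and, for the upper bound, uniformly over all such windows up to $n$). This is a question about nonlinear exponential sums of the shape $\sum_{P<p\le2P}e^{2\pi i h n/p}$, and bounds of the quality and uniformity required do not follow from Chebyshev's estimates or the prime number theorem; worse, one must exclude a conspiratorial bias of the occurring integers toward or away from the primes, which is precisely what no current technique controls (even $\card\mathscr{A}_n\to+\infty$ does not seem to follow by elementary means). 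A complete proof would therefore need genuinely new input — for instance an Elliott--Halberstam-type hypothesis, or a strong equidistribution statement for $n\bmod p$ over primes $p\asymp\sqrt n$ — whereas at present the only support is the heuristic stated above: the $\lfloor\sqrt n\rfloor$ numbers $\lfloor\frac nk+1\rfloor$ satisfy no special congruence and so ought to carry their fair share $\asymp\sqrt n/\log n$ of primes.
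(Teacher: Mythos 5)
There is nothing in the paper to compare your attempt against: the statement is Conjecture \ref{conj2}, and the authors offer no proof of it, only the one-sentence heuristic that the $\lfloor\sqrt{n}\rfloor$ numbers $\lfloor\frac{n}{k}+1\rfloor$ ``do not satisfy (apparently) any particular congruence'' and so should carry about $\sqrt{n}/\log\sqrt{n}$ primes; they state explicitly that they were unable to confirm it. Your proposal correctly recognizes this and does not claim a proof, so there is no hidden gap in the sense of an argument that silently fails --- you name the obstruction (equidistribution of $n\bmod p$ over primes $p$ in dyadic windows, uniformly in $n$) rather than papering over it, and that identification is accurate: the dual criterion $p\in\mathscr{A}_n\Leftrightarrow(\frac{n}{p},\frac{n}{p-1}]\cap\Z\neq\emptyset$ for $p>\sqrt{n}+1$ is exactly the equivalence the paper uses in Theorem \ref{t2} and Proposition \ref{p5}, and your translation of it into a condition on the residue $n\bmod p$ is a genuine sharpening of the paper's heuristic.

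The one substantive point of contact with what the paper actually proves is your unconditional upper bound. Your dyadic decomposition over $P\leq p<2P$, taking $\min(n/P,\,P/\log P)$ in each block and balancing at $P\asymp\sqrt{n\log n}$, gives $\card\mathscr{A}_n=O(\sqrt{n/\log n})$; this is precisely Proposition \ref{p8}, which the paper obtains by the equivalent single-threshold split of the $k$-range at $\sqrt{n}/t$ with $t=\sqrt{\log n}$ (trivial bound on one side, Chebyshev's $\pi(\sqrt{n}\,t+1)=O(\sqrt{n}\,t/\log n)$ on the other). The two arguments are the same optimization in different clothing; the dyadic version makes it more visible that the bottleneck sits at $P\asymp\sqrt{n\log n}$ and that any improvement must show the integers $\lfloor\frac{n}{k}+1\rfloor$ in those blocks are not biased toward primes. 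In short: your write-up is a correct and somewhat more detailed account of why the conjecture is plausible and why it is out of reach, consistent with the paper's own position, but --- like the paper --- it does not prove the statement.
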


The proof of Conjecture \ref{conj1} through Conjecture \ref{conj2} is then immediate:

\begin{proof}[Proof of Conjecture \ref{conj1} through Conjecture \ref{conj2}]
It suffices to insert the estimate $\card \mathscr{A}_n = O\left(\frac{\sqrt{n}}{\log{n}}\right)$ (provided by Conjecture \ref{conj2}) into the estimate of Proposition \ref{p7}.
\end{proof}

Unfortunately, we were unable to confirm Conjecture \ref{conj2}, so the best result we have achieved is Theorem \ref{t4}. Before setting out the proof of that theorem, it is important to note that the trivial estimate $\card \mathscr{A}_n = O\left(\sqrt{n}\right)$ leads us (through Proposition \ref{p7}) to the estimate:
$$
\log{\sigma_n} = n \log{n} - n + O\left(\sqrt{n} \log{n}\right) .
$$
We shall improve the later by counting more intelligently the elements of $\mathscr{A}_n$. We have the following

\begin{prop}\label{p8}
We have
$$
\card \mathscr{A}_n = O\left(\sqrt{\frac{n}{\log{n}}}\right) .
$$
\end{prop}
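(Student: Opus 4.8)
The plan is to count $\card \mathscr{A}_n$ by exploiting the fact that the elements of $\mathscr{A}_n$ are primes lying in the interval $(\sqrt{n}, n+1]$, and to combine two complementary counting strategies according to the size of the relevant index $k$. Recall that $\mathscr{A}_n$ consists of those primes among the $\lfloor\sqrt{n}\rfloor$ pairwise-distinct integers $\lfloor \frac{n}{k}+1\rfloor$, $1 \le k \le \sqrt{n}$. First I would fix a threshold, say around $T = \sqrt{n/\log n}$ (or some constant multiple thereof), and split the range $1 \le k \le \sqrt{n}$ into the ``small'' range $k \le T$ and the ``large'' range $T < k \le \sqrt{n}$. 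For the small range, the trivial bound gives at most $T = O(\sqrt{n/\log n})$ values of $k$, hence at most that many primes contributed — this half costs nothing beyond the trivial count.

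For the large range $T < k \le \sqrt{n}$, the key observation is that here $\frac{n}{k}+1$ is comparatively small: we have $\sqrt{n} < \lfloor\frac{n}{k}+1\rfloor \le \frac{n}{T}+1 = O(\sqrt{n\log n})$. So every prime $p \in \mathscr{A}_n$ arising from a large $k$ lies in the window $(\sqrt{n}, \, c\sqrt{n\log n}]$ for a suitable absolute constant $c$. By Chebyshev's estimate $\pi(x) = O(x/\log x)$, the number of primes in that window is
$$
O\!\left(\frac{\sqrt{n\log n}}{\log\sqrt{n\log n}}\right) = O\!\left(\frac{\sqrt{n\log n}}{\log n}\right) = O\!\left(\sqrt{\frac{n}{\log n}}\right),
$$
using $\log\sqrt{n\log n} = \tfrac12\log n + \tfrac12\log\log n \gg \log n$. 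Since the integers $\lfloor\frac{n}{k}+1\rfloor$ are pairwise distinct (as established in the excerpt just before Proposition \ref{p6}), each such prime is produced by at most one value of $k$, so the count of elements of $\mathscr{A}_n$ coming from large $k$ is bounded by this same $O(\sqrt{n/\log n})$.

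Adding the two contributions yields $\card \mathscr{A}_n = O(\sqrt{n/\log n})$, as claimed. The only point requiring a little care is the choice of threshold $T$: it must be large enough that the ``small $k$'' count $T$ is $O(\sqrt{n/\log n})$, yet small enough that $n/T$ stays of order $\sqrt{n\log n}$ so that the Chebyshev bound on the prime window still gives the target order; the value $T \asymp \sqrt{n/\log n}$ balances these exactly. I expect no genuine obstacle here — this is essentially the standard Dirichlet hyperbola-type splitting combined with Chebyshev's upper bound — the mild subtlety is just bookkeeping the logarithmic factors so that both halves land at $O(\sqrt{n/\log n})$ rather than the weaker $O(\sqrt{n})$.
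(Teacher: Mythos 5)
Your proof is correct and follows essentially the same route as the paper: the paper splits the range at $k = \sqrt{n}/t$ with a free parameter $t$, bounds the small-$k$ part trivially and the large-$k$ part by $\pi(\sqrt{n}\,t+1)$ via Chebyshev, and then optimizes to $t \asymp \sqrt{\log n}$, which is exactly your explicit threshold $T \asymp \sqrt{n/\log n}$. Your explicit appeal to the pairwise distinctness of the $\lfloor \frac{n}{k}+1\rfloor$ to pass from counting $k$'s to counting primes is a point the paper leaves implicit, and is correctly handled.
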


\begin{proof}
Let $n$ be a fixed sufficiently large positive integer and let $t$ be a real parameter with $1 \leq t \leq \sqrt{n}$ (we will choose $t$ later in terms of $n$ in order to optimize the result). We have (according to \eqref{eq18})
\begin{align}
\card \mathscr{A}_n & = \sum_{\begin{subarray}{c}
1 \leq k \leq \sqrt{n} \\
\left\lfloor\frac{n}{k} + 1\right\rfloor \text{ is prime}
\end{subarray}} 1 \notag \\[1mm]
& = \sum_{\begin{subarray}{c}
1 \leq k \leq \frac{\sqrt{n}}{t} \\
\left\lfloor\frac{n}{k} + 1\right\rfloor \text{ is prime}
\end{subarray}} 1 + \sum_{\begin{subarray}{c}
\frac{\sqrt{n}}{t} < k \leq \sqrt{n} \\
\left\lfloor\frac{n}{k} + 1\right\rfloor \text{ is prime}
\end{subarray}} 1 . \label{eq19}
\end{align}
Next, we have on the one hand:
\begin{equation}\label{eq20}
\sum_{\begin{subarray}{c}
1 \leq k \leq \frac{\sqrt{n}}{t} \\
\left\lfloor\frac{n}{k} + 1\right\rfloor \text{ is prime}
\end{subarray}} \!\!\!\!\!\! 1 \leq \sum_{1 \leq k \leq \frac{\sqrt{n}}{t}} 1 \leq \frac{\sqrt{n}}{t} ,
\end{equation}
and on the other hand:
\begin{align*}
\sum_{\begin{subarray}{c}
\frac{\sqrt{n}}{t} < k \leq \sqrt{n} \\
\left\lfloor\frac{n}{k} + 1\right\rfloor \text{ is prime}
\end{subarray}} \!\!\!\!\!\! 1 & \leq \card\!\!\left\{p \text{ prime } ~;~ p \leq \sqrt{n} \, t + 1\right\} ~~~~~~ \left(\text{by putting } p = \left\lfloor\frac{n}{k} + 1\right\rfloor\right) \\
& = \pi\left(\sqrt{n} \, t + 1\right) \\
& = O\left(\frac{\sqrt{n} \, t}{\log(\sqrt{n} \, t)}\right) ~~~~~~~~~~ (\text{according to the Chebyshev estimate}) ,
\end{align*}
implying (since $1 \leq t \leq \sqrt{n}$) that:
\begin{equation}\label{eq21}
\sum_{\begin{subarray}{c}
\frac{\sqrt{n}}{t} < k \leq \sqrt{n} \\
\left\lfloor\frac{n}{k} + 1\right\rfloor \text{ is prime}
\end{subarray}} \!\!\!\!\!\! 1 = O\left(\frac{\sqrt{n} \, t}{\log{n}}\right) .
\end{equation}
By inserting \eqref{eq20} and \eqref{eq21} into \eqref{eq19}, we get
$$
\card \mathscr{A}_n = O\left(\frac{\sqrt{n}}{t}\right) + O\left(\frac{\sqrt{n}}{\log{n}} t\right) .
$$
To obtain an optimal result, we must take $t = O\left(\sqrt{\log{n}}\right)$, providing
$$
\card \mathscr{A}_n = O\left(\sqrt{\frac{n}{\log{n}}}\right) ,
$$
as required.
\end{proof}

We are finally ready to prove Theorem \ref{t4}.

\begin{proof}[Proof of Theorem \ref{t4}]
It suffices to insert the estimate of Proposition \ref{p8} into that of Proposition \ref{p7}.
\end{proof}

\section[Concluding remarks]{Concluding remarks about the connection between the arithmetic and the analytic studies}

In this section, we briefly explain how we can derive our asymptotic estimates concerning $\log{\sigma_n}$ (i.e., Theorem \ref{t4} and Conjecture \ref{conj1}) rather from our arithmetic study. For the sequel, we let $\c_1 , \c_2 , \c_3$, etc. denote suitable absolute constants greater than $1$. For any $n \in \N^*$, we can write:
\begin{equation}\label{eq22}
\frac{\sigma_n}{n!} = \prod_{\begin{subarray}{c}
p \text{ prime} \\
p \leq \sqrt{n + 1}
\end{subarray}} p^{\vartheta_p\left(\frac{\sigma_n}{n!}\right)} \cdot \prod_{\begin{subarray}{c}
p \text{ prime} \\
\sqrt{n + 1} < p \leq n + 1
\end{subarray}} p^{\vartheta_p\left(\frac{\sigma_n}{n!}\right)} .
\end{equation}
For the primes $p \leq \sqrt{n + 1}$, we estimate $\vartheta_p\left(\frac{\sigma_n}{n!}\right)$ as follows: let $n = \overline{a_r a_{r - 1} \dots a_0}_{(p)}$ be the representation of $n$ in base $p$ ($r \in \N$, $a_0 , a_1 , \dots , a_r \in \{0 , 1 , \dots , p - 1\}$, and $a_r \neq 0$). Then we have (according to the Legendre formula \eqref{eq25}):
\begin{multline*}
\vartheta_p\left(\frac{\sigma_n}{n!}\right) = \vartheta_p\left(\sigma_n\right) - \vartheta_p(n!) = \left\lfloor\frac{n}{p - 1}\right\rfloor - \frac{n - S_p(n)}{p - 1} \leq \frac{S_p(n)}{p - 1} \leq \frac{(p - 1) (r + 1)}{p - 1} \\
= r + 1 .
\end{multline*}
Thus
$$
p^{\vartheta_p\left(\frac{\sigma_n}{n!}\right)} \leq p^{r + 1} \leq p \, n .
$$
Consequently, we have
\begin{equation}\label{eq23}
\prod_{\begin{subarray}{c}
p \text{ prime} \\
p \leq \sqrt{n + 1}
\end{subarray}} p^{\vartheta_p\left(\frac{\sigma_n}{n!}\right)} \leq \left(\prod_{\begin{subarray}{c}
p \text{ prime} \\
p \leq \sqrt{n + 1}
\end{subarray}} p\right) \cdot n^{\pi(\sqrt{n + 1})} \leq \c_1^{\sqrt{n}}
\end{equation}
(according to the Chebyshev estimates). \\
However, for the primes $p > \sqrt{n + 1}$, we estimate $\vartheta_p\left(\frac{\sigma_n}{n!}\right)$ by using Theorem \ref{t2} saying us that for any such prime $p$, we have $\vartheta_p\left(\frac{\sigma_n}{n!}\right) \in \{0 , 1\}$ and $\vartheta_p\left(\frac{\sigma_n}{n!}\right) = 1$ if and only if $p \in \mathscr{A}'_n$, where
$$
\mathscr{A}'_n := \left\{\left\lfloor\frac{n}{k} + 1\right\rfloor ~;~ k \in \N^* , k < \sqrt{n + 1} + 1\right\} \cap \mathscr{P} .
$$
(Note the resemblance between $\mathscr{A}'_n$ and $\mathscr{A}_n$). So we have
$$
\prod_{\begin{subarray}{c}
p \text{ prime} \\
\sqrt{n + 1} < p \leq n + 1
\end{subarray}} p^{\vartheta_p\left(\frac{\sigma_n}{n!}\right)} = \prod_{p \in \mathscr{A}'_n} p .
$$
By using the results of §\ref{sec4} concerning $\card \mathscr{A}_n$ (almost the same with $\card \mathscr{A}'_n$), we easily derive that the quantity $\prod p^{\vartheta_p\left(\frac{\sigma_n}{n!}\right)}$, where the product is over the primes $p$ satisfying $\sqrt{n + 1} < p \leq n + 1$, is conjecturally bounded below by $\c_2^{\sqrt{n}}$ and bounded above by $\c_3^{\sqrt{n \log{n}}}$ (or conjecturally by $\c_3^{\sqrt{n}}$). By inserting this together with \eqref{eq23} into \eqref{eq22}, we conclude that $\frac{\sigma_n}{n!}$ is conjecturally bounded below by $\c_2^{\sqrt{n}}$ and bounded above by $\c_4^{\sqrt{n \log{n}}}$ (or conjecturally by $\c_4^{\sqrt{n}}$). Notice that this is exactly what obtained in §\ref{sec4}. By this approach, the constant $\c$ (present in the analytic approach when estimating $\log{\rho_n}$ and then eliminated when estimating $\log{\sigma_n}$) does remarkably not appear!

\section*{Acknowledgement}
The authors acknowledge support from the Algerian DGRSDT (Direction G\'en\'erale de la Recherche Scientifique et du D\'eveloppement Technologique).

\rhead{\textcolor{OrangeRed3}{\it References}}

\end{document}